\newtheorem{Definition}{Definition}[section]
\newtheorem{Theorem}[Definition]{Theorem}
\newtheorem{Lemma}[Definition]{Lemma}
\newtheorem{Remark}[Definition]{Remark}
\newtheorem{Corollary}[Definition]{Corollary}
\newtheorem{Proposition}[Definition]{Proposition}
\def\Z{\mathbb{Z}}
\def\F{\mathbb{F}}
\def\P{\mathbb{P}}
\newcommand{\Ld}[2][D]{\ensuremath{\mathcal{L}(#2\mathcal{#1})}}
\newcommand\subrel[2]{\mathrel{\mathop{#2}\limits_{#1}}}
\newcommand{\udots}{\mathinner{\mskip1mu\raise1pt\vbox{\kern7pt\hbox{.}}
  \mskip2mu\raise4pt\hbox{.}\mskip2mu\raise7pt\hbox{.}\mskip1mu}}
\title{On the construction of elliptic Chudnovsky-type algorithms for multiplication in large extensions of finite fields.}
\author{
{Stéphane Ballet{\small $~^{1}$}, Alexis Bonnecaze{\small $~^{2}$}, Mila Tukumuli{\small $~^{3}$}}\\
Aix Marseille University\\Institut de Mathématiques de Luminy\\
Campus de Luminy\\
13288 Marseille Cedex 9, France\\
}
\date{\today}
\begin{document}
 
\maketitle

\begin{abstract}
\noindent
We indicate a strategy in order to construct bilinear multiplication algorithms 
of type Chudnovsky in large extensions of any finite field. In particular, 
by using the symmetric version of the generalization of Randriambololona specialized 
on the elliptic curves, we show that it is possible to construct such algorithms with low bilinear complexity.  
More precisely, if we only consider the Chudnovsky-type algorithms of type symmetric elliptic, we show 
that the symmetric bilinear complexity of these algorithms is in $O(n(2q)^{\log_q^*(n)})$ where $n$ corresponds to 
the extension degree, and $\log_q^*(n)$ is the iterated logarithm. Moreover, we show that the construction of such algorithms can be done in time polynomial in $n$. Finally, applying this method we present the effective construction, step by step, of such an algorithm of multiplication in the finite field $\F_{3^{57}}$.
\end{abstract}

Keywords: Elliptic function fields, multiplication algorithm, tensor rank.

\section{Introduction}
\noindent
A growing number of applications, such as asymmetric cryptography, make use of big integer arithmetic. In this context, it is important to conceive and develop efficient arithmetic algorithms combined with an optimal implementation method. Accelerating basic arithmetic operations can provide efficient arithmetic algorithms and thus, can make faster a protocol which executes a lot of multiplications. This situation typically occurs when considering cryptographic protocols. In this paper, we only care about the multiplication operation. There exist numerous multplication algorithms in the literature, examples are Karatsuba's algorithm for polynomial multiplication, Toom-Cook's algorithm for large integer multiplication, but also Strassen's algorithm for matrix multiplication.  In this article, we are interested in multiplication algorithms in any extension of finite fields, in particular the focus is on the Chudnovsky-Chudnovsky method~\cite{Chudnovsky}. This method, based on interpolation on algebraic curves defined over a finite field  allows us to obtain multiplication algorithms with low bilinear complexity. Our objective is to construct explicitely such multiplication algorithms for large finite extensions of finite fields. The Chudnovsky-Chudnovsky method and its variants have been extensively studied these last years through the work of Shparlinsky, Tsfasmann, Vladut~\cite{shtsvl}, Baum and Shokrollahi~\cite{ShokBaum}, Ballet- and Rolland~\cite{Ballet4},~\cite{Ballet5}, 
Chaumine~\cite{chau}, Arnaud~\cite{Arnaud}, Cenk-Ozbudak~\cite{CenkOzbudak} and Cascudo, Cramer, Xing and Yang~\cite{cacrxiya}, and recently Randriambololona~\cite{Randriam}. Indeed, the studies on the subject are of both theoretical and practical importance: theoretically, the bilinear complexity is linked to the tensor rank and in practice, it is related to the number of gates in an electronic circuit. 
However, most of the work focused on the improvement of the bounds on the bilinear complexity and the theoretical aspects of the Chudnovsky-type algorithms (in particular the underlying geometry of Riemann-Roch spaces).

\subsection{Multiplication algorithm and tensor rank}

Let  $\F_{q}$ be a finite field where $q$ is a prime power, $\F_{q^n}$ is the degree $n$ extension of $\F_{q}$ and $(e_{1},\ldots, e_{n})$ denotes a basis of $\F_{q^n}$ over $\F_{q}$. We define for two elements of $\F_{q^n}$
$$ X=\sum_{i=1}^{n}x_{i}e_{i} \hspace{1cm} \mbox{and} \hspace{1cm} Y=\sum_{i=1}^{n}y_{i}e_{i}, $$   
\noindent
the complexity of multiplication of $\F_{q^n}$ over $\F_{q}$ as the number of elementary operations needed to obtain the product $X.Y$ in $\F_{q^n}$, where elementary operations are:
\begin{enumerate}
\item
 Addition: $ (a,b) \mapsto a + b $,~~with~~ $a,b~\in~\F_{q}$.
\item 
 Scalar multiplication: $ x \mapsto c.x $,~~with~~ $c,x~\in~\F_{q}$.
\item 
Bilinear multiplication: $ (a,b) \mapsto a. b $,~~with~~ $a,b~\in~\F_{q}$.
\end{enumerate} 
\noindent
In this paper, we focus on the construction of algorithms realizing the  multiplication in extensions of finite fields 
with a minimal number (called bilinear complexity) of two-variable multiplications (called bilinear multiplications) without considering 
the other operations as multiplications by a constant (called scalar multiplications). More precisely, let us recall the notions 
of multiplication algorithm and associated bilinear complexity in terms of tensor rank.  


\begin{Definition}
Let $K$ be a field, and $E_0, \ldots, E_s$ be finite dimensional $k$-vector spaces. A non zero element $t\in E_0 \otimes \cdots \otimes E_s$ is said to be an elementary tensor, or a tensor of rank 1, if it can be written in the form $t=e_0 \otimes \cdots \otimes e_s$ for some $e_i \in E_i$. More generally, the rank of an arbitrary $t\in E_0 \otimes \cdots \otimes E_s$  is defined as the minimal length of a decomposition of $t$ as a sum of elementary tensors.
\end{Definition}

\begin{Definition}
If $$ \alpha~~:~~E_1\times \cdots \times E_s \longrightarrow E_0$$
is an $s$-linear map, the $s$-linear complexity of $\alpha$ is defined as the tensor rank of the element
$$\tilde{\alpha}\in E_0\otimes E_1^{\vee} \otimes \cdots \otimes E_s^{\vee}$$ where $E_i^{\vee}$ denotes the dual of $E_i$ as vector space over $K$ for any integer $i$, naturally deduced from $\alpha$. In particular, the $2$-linear complexity is called the bilinear complexity.
\end{Definition}

\begin{Definition}
Let $\mathcal A $ be a finite-dimensional $K$-algebra. We denote by 
$$\mu(\mathcal A/K)$$
the bilinear complexity of the multiplication map
$$m_{\mathcal A}~~:~~\mathcal A \times \mathcal A \longrightarrow \mathcal A$$
considered as a $K$-bilinear map. 

\noindent
In particular, if $\mathcal{A}=\F_{q^n}$ and $K=\F_q$,  we let: $$\mu_q(n)=\mu(\F_{q^n}/\F_q).$$
\end{Definition}

\noindent
More concretely, $\mu(\mathcal A/K)$ is the smallest integer $n$ such that there exist linear forms $\phi_1,\ldots, \phi_n$ and $\psi_1, \ldots, \psi_n~~:~~\mathcal A \longrightarrow K$, and elements $w_1, \ldots, w_n \in \mathcal A$, such that for all $x,y\in \mathcal A$ one has
\begin{equation} xy= \phi_1(x)\psi_1(y)w_1+\cdots +\phi_n(x)\psi_n(y)w_n, \label{xy}\end{equation}
since such an expression is the same thing as a decomposition

\begin{equation}
T_M=\sum_{i=1}^{n}w_i\otimes \phi_i\otimes \psi_i \in  \mathcal{A} \otimes \mathcal{A} \otimes \mathcal{A}^{\vee}.
\end{equation}
\noindent
for the multiplication tensor of $\mathcal{A}$.

\begin{Definition}
We call multiplication algorithm of length $n$  for $\mathcal A/K$ a collection of $\phi_i, \psi_i, w_i$ that satisfy \eqref{xy} or equivantly a tensor decomposition $$T_M=\sum_{i=1}^{n}w_i\otimes \phi_i\otimes \psi_i \in  \mathcal{A} \otimes \mathcal{A} \otimes \mathcal{A}^{\vee}$$ 
for the multiplication tensor of $\mathcal{A}$. Such an algorithm is said symmetric if $\phi_i=\psi_i$ for all $i$ (this can happen only if $\mathcal A$ is commutative).
\end{Definition}
\noindent
Hence, when $\mathcal{A}$ is commutative, it is interesting to study the minimal length of a symmetric multiplication algorithm. 

\begin{Definition}
If $\mathcal A $ is a finite-dimensional $K$-algebra. The symmetric bilinear complexity $$\mu^{sym}(\mathcal A/K)$$ 
is the minimal length of a symmetric multiplication algorithm. 

\noindent
In particular, if $\mathcal{A}=\F_{q^n}$ and $K=\F_q$,  we let: $$\mu^{sym}_q(n)=\mu^{sym}(\F_{q^n}/\F_q).$$
\end{Definition}

\subsection{Known results}
Let us recall some classical known results. In their seminal papers, Winograd~\cite{Winograd} and De Groote~\cite{Groote} have shown that $ \mu(\F_{q^n}/ \F_{q}) \geq 2n-1 $, with equality holding if and only if $n \leq \frac{1}{2}q + 1$. Winograd have also proved~\cite{Winograd} that optimal multiplication algorithms realizing the lower bound belong to the class of interpolation algorithms. Later, generalizing  interpolation algorithms on the projective line over $\F_{q}$ to algebraic curves of higher genus over $\F_{q}$, Chudnovsky and Chudnovsky provided a method~\cite{Chudnovsky} which enabled to prove the linearity~\cite{Ballet2} of the bilinear complexity of multiplication in finite extensions of a finite field. Moreover, they proposed the first known  multiplication algorithm using interpolation to algebraic function fields (of one variable) over $ \F_{q} $.  This is the so-called Chudnovsky and Chudnovsky algorithm, also called Chudnovsky algorithm to simplify. Then, several studies will focus on the qualitative improvement of this algorithm (for example~\cite{Ballet4},~\cite{Arnaud},~\cite{CenkOzbudak},~\cite{Randriam}) as well as the improvements of upper bounds (for example~\cite{Ballet5},~\cite{BalletPieltant}) and asymptotic upper bounds (for example~\cite{shtsvl},~\cite{cacrxiya}) of the bilinear complexity. However, few studies have been devoted to the effective construction of Chudnovsky-type algorithms, and in particular no work has been done when the degree of extensions reach cryptographic size. Indeed, the first known effective finite fields multiplication through interpolation on algebraic curves was proposed by Shokrollahi and Baum ~\cite{ShokBaum}. They used the Fermat curve $ x^3 + y^3 = 1$ to construct multiplication algorithm over $\F_{4^4}$ with 8 bilinear multiplications. In~\cite{Ballet3}, Ballet proposed one over $ \F_{16^n} $ where $ n~\in~[13,14,15]$, using the hyperelliptic curve $y^2 + y = x^5$ with $2n + 1$  bilinear multiplications. Notice that these aforementioned two algorithms only used rational points, and multiplicity equals to one. 
Recently Cenk and \"{O}zbudak proposed in~\cite{CenkOzbudak} an explicit multiplication algorithm in $ \F_{3^9} $ with 26 bilinear multiplications. To this end, they used the elliptic curve $ y^2 = x^3 +  x  + 2 $ with points of higher degree and higher multiplicity.

\subsection{Organization of the paper and new results}

In Section 2, we fix the notation and we recall the different versions of Chudnovsky-type algorithms. 
Then in Section 3, we present a strategy in order to construct multiplication algorithms 
of type Chudnovsky in arbitrary large extensions of finite fields. 
In particular, we show that from an elliptic curve defined over any finite field $\F_q$, we can exhibit a 
symmetric version of the generalization of Randriambololona (specialized 
on the elliptic curves) for any extension of $\F_q$ of degree $n$, with low bilinear complexity.  
More precisely, if we only consider the Chudnovsky-type  algorithms of type symmetric elliptic, we show 
that the symmetric bilinear complexity of these algorithms is in $O(n(2q)^{\log_q^*(n)})$. 
Even if this asymptotical complexity is quasi-linear, it has the advantage to be derivated from an infinite family 
of symmetric algorithms with a fixed genus equals to one, which corresponds to the specificity of our strategy in contrast 
to the usual strategy.  Consequently, fixing the genus to one allows us to control the complexity of the construction, 
meaning that for finite fields of cryptographic size, one can construct in a reasonable time such algorithms. 
Indeed, we prove that the complexity of the construction of symmetric elliptic algorithms is in time polynomial in $n$. 
In fact, this is not at all the case of the usual strategy based upon the construction of algorithms with growing genus 
since the complexity of such a construction is not known because of the problem of the explicit construction of 
high degree points~\cite[Section 4, Remarks 5]{shtsvl}. Finally in section 4, we present new upper bounds for 
large extensions of $\F_2$ and $\F_3$, and we also propose the effective construction, 
step by step, of an algorithm of multiplication in $\F_{3^{57}}$.

\section{Multiplication algorithms of type Chudnovsky}

We start with some elementary terminology and results of algebraic function fields. 
A comprehensive course of the subject  can be found in~\cite{Stich}.

\subsection{Notation}

An algebraic function field $F /\F_{q} $ of one variable over $\F_{q}$ is an extension field $ F \supseteq \F_{q}$ such that F is a finite extension of $\F_{q} (x)$ for some element $ x ~ \in ~ F $ which is transcendental over $\F_{q}$. A valuation ring of the function field $F /\F_{q}$ is a ring $\mathcal{O} \subseteq F$ such that $\F_{q} \subset \mathcal{O} \subset F$ and for any $z \in F$, either $ z \in \mathcal{O} ~or~ z^{-1} \in \mathcal{O}$. A place P of the function field $F /\F_{q}$ is the maximal ideal of some valuation ring $\mathcal{O}$ of $F /\F_{q}$. If $\mathcal{O}$ is a valuation ring of $F /\F_{q}$ and P is its maximal ideal, then $\mathcal{O}$ is uniquely determined by P hence we denote $\mathcal{O}$ by $\mathcal{O}_{P}$.
Every place $P$ can be written as $P = t\mathcal{O}_{P}$, where $t$ is the local parameter for $P$. We will denote the set of all places of $F /\F_{q}$ as $\P_{F}$. For a place $P$,
$ F_{P} := \mathcal{O}_{P}/P $ is called the residue class field of P. The map $ x \rightarrow x(P)$ from $F$ to $F_{P} \cup \{\infty\}$ is called the residue class map with respect to P. The degree of P is defined by $ [F_{P} : \F_{q} ] := \deg P $. The free abelian group which is generated by the places of $F /\F_{q}$ is called the divisor group of $F /\F_{q}$ and it is denoted by $\mathscr{D}_{F}$, so a divisor is a formal sum $D = \sum_{P \in \P_{F}}n_{P}P $, with $n_{P} \in \Z$ almost all $n_{P}= 0 $, of degree $\deg (D) = \sum_{P \in \P_{F}} v_{P}(D).\deg P $ where $v_{P}$ is a discrete valuation associated to the place $P$. The support of a divisor $D$ denoted $supp~D$ is the set of places $P$ with $v_{P}(D) \neq 0$. For a function $f \in F/\F_{q}$, we denote by $(f) = \sum_{P \in \P_{F}} v_{P}(f).P$ the principal divisor of $f$. If $D$ is a divisor then $\mathscr{L}(D) = \left\{ f~\in~ F ~|~D + (f) \geq 0 \right\} \cup \left\lbrace 0\right\rbrace  $ is the Riemann-Roch space which is a  $\F_{q}$-vector space. The integer $ \ell(D) = \dim \mathscr{L}(D)$ is called the dimension of $D$ and $i(D) = \dim D - \deg D + g - 1$
is the index of speciality of $D$. We say that $D$ is non-special if $i(D) = 0$ and special otherwise.\\

\subsection{Original Algorithm of Chudnovsky}

We are now able to state the original Chudnovsky algorithm~\cite{Chudnovsky} and its recent improvements.

\begin{Theorem}\label{AlgoChud}
Let
\begin{itemize}
\item $F/\F_{q}$ be an algebraic function field of one variable,
\item $Q$ be a degree n place of $F/\F_{q}$,
\item $D$ be a divisor of $F/\F_{q}$,
\item $P = \{P_{1}, \ldots, P_{N} \}$ be a set of rational places.
\end{itemize}
We suppose that $ supp~D \cap  \{Q, P_{1}, \ldots, P_{N} \} = \emptyset $  and that:

$A-$ The application
 
                                  $$   Ev_{Q} : ~~~\mathscr{L}(D) \longrightarrow  \frac{\mathcal{O}_{Q}}{Q} $$
$$ \hspace{2cm}   f ~~\longmapsto  f(Q) $$

is surjective. 

$B-$ The application
\begin{center} 
$ Ev_{\mathcal{P}}:~~\mathscr{L}(2D)\longrightarrow  \F_{q}^{N} $\\
$ \hspace{4,8cm} f ~~\longmapsto  \left(f(P_{1}),\ldots,f(P_{N}) \right) $
\end{center}       
is injective.

Then
                $$ \mu^{sym}({\F_{q^n}/\F_{q}}) \leq N.$$

\end{Theorem}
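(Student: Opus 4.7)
The plan is to exhibit an explicit symmetric multiplication algorithm of length $N$ by identifying $\F_{q^n}$ with the residue field $\mathcal{O}_Q/Q$ and recovering products of functions in $\mathcal{L}(D)$ through their values at the rational places $P_1,\ldots,P_N$. Hypothesis $A$ will allow me to \emph{lift} elements of $\F_{q^n}$ to $\mathcal{L}(D)$, while hypothesis $B$ will allow me to \emph{reconstruct} a product of two such lifts (which necessarily lives in $\mathcal{L}(2D)$) from its values at the $P_i$.

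More concretely, since $Ev_Q$ is surjective I would first fix any $\F_q$-linear section $s \colon \F_{q^n} \to \mathcal{L}(D)$ of $Ev_Q$, so that $s(x)(Q)=x$ for every $x \in \F_{q^n}$; such a section exists because a surjection of $\F_q$-vector spaces splits. The key algebraic observation is that for any $f,g \in \mathcal{L}(D)$ the product $fg$ lies in $\mathcal{L}(2D)$, since
\[ (fg) + 2D = ((f)+D) + ((g)+D) \geq 0. \]
The disjointness assumption $\mathrm{supp}\, D \cap \{Q,P_1,\ldots,P_N\} = \emptyset$ ensures that every element of $\mathcal{L}(2D)$ has a well-defined residue at $Q$ and well-defined values at each $P_i$. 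Because $Ev_{\mathcal{P}}$ is injective on $\mathcal{L}(2D)$, I can pick any $\F_q$-linear map $\rho \colon \F_q^N \to \mathcal{L}(2D)$ that is a left inverse of $Ev_{\mathcal{P}}$ restricted to $\mathcal{L}(2D)$.

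With these ingredients, given $x,y \in \F_{q^n}$ I would set $\phi_i(x) = s(x)(P_i) \in \F_q$ and $w_i = \rho(e_i)(Q) \in \F_{q^n}$, where $e_1,\ldots,e_N$ is the canonical basis of $\F_q^N$. Each $\phi_i$ is a $\F_q$-linear form on $\F_{q^n}$. The product $s(x)s(y) \in \mathcal{L}(2D)$ has evaluation vector $(\phi_i(x)\phi_i(y))_{i=1}^N$, so the left-inverse property gives
\[ s(x)s(y) = \sum_{i=1}^{N} \phi_i(x)\phi_i(y)\,\rho(e_i). \]
Evaluating at $Q$ and using $s(x)(Q)=x$, $s(y)(Q)=y$, I conclude $xy = \sum_{i=1}^{N} \phi_i(x)\phi_i(y)\,w_i$, which is a symmetric bilinear multiplication algorithm of length $N$.

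There is no serious obstacle: the proof is essentially the sequence of verifications above, and both $s$ and $\rho$ exist for purely linear-algebraic reasons. The only point demanding a little attention is the compatibility between $Ev_Q$ acting on $\mathcal{L}(D)$ and the residue/evaluation maps acting on $\mathcal{L}(2D)$, which is precisely what the disjointness hypothesis on $\mathrm{supp}\, D$ (and the equality $\mathrm{supp}\, 2D = \mathrm{supp}\, D$) is there to guarantee. Symmetry of the resulting algorithm is automatic from using the \emph{same} section $s$ for both arguments; were one to allow two different sections one would obtain instead the usual (non-symmetric) bound $\mu_q(n) \leq N$.
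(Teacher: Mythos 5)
Your proof is correct and is essentially the standard argument behind the Chudnovsky--Chudnovsky theorem, which the paper simply recalls from \cite{Chudnovsky} without reproving: lift $x,y$ to $\mathcal{L}(D)$ via a section of $Ev_Q$, observe the product lies in $\mathcal{L}(2D)$, reconstruct it from its values at the $P_i$ using injectivity of $Ev_{\mathcal{P}}$, and evaluate at $Q$. All the linear-algebraic and valuation-theoretic details (existence of $s$ and $\rho$, well-definedness and multiplicativity of evaluation at $Q$ thanks to the disjoint-support hypothesis, and symmetry from using the same section for both factors) are handled correctly.
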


\noindent
A drawback of this algorithm is that it only uses rational points. Moreover, finding sufficiently rational points and suitable divisors such that evaluation maps are surjective and injective is either a difficult task, or even impossible. Consequently, some researchers proposed several improvements and variants that we present in the next section. 

\vspace{1em}

\subsection{Generalization of Arnaud and Cenk-Ozbudak}

In order to obtain good estimates for the bilinear complexity, S. Ballet gave in~\cite{Ballet2} some conditions easy to verify allowing the use of Chudnovsky  algorithm. Then S. Ballet and R. Rolland generalized in~\cite{Ballet4} the original algorithm using places of degree $1$ and~$2$. The best finalized version of this algorithm in this direction, is the generalization introduced by N. Arnaud in~\cite{Arnaud}  and improved by M. Cenk and F. \"Ozbudak in~\cite{CenkOzbudak}. This generalization uses several coefficients in the local expansion at each place $P_i$ instead of just the first one. Due to the way to obtain the local expansion of a product from the local expansion of each term, the bound for the bilinear complexity involves the complexity notion $\widehat{M_q}(u)$ introduced by Cenk and \"Ozbudak in~\cite{CenkOzbudak} and defined as follows:
\begin{Definition}\label{DefMu}
We denote by $\widehat{M_q}(u)$ the minimum number of multiplications needed in $\F_q$ in order to obtain coefficients of the product of two arbitrary $u$-term polynomials modulo $x^u$ in $\F_q[x]$.
\end{Definition}
\noindent
For instance, we know that for all prime powers $q$, we have $\widehat{M_q}(2) \leq 3$ by~\cite{CenkOzbudak2}. Now, we introduce the generalized algorithm of type Chudnovsky described in~\cite{CenkOzbudak}.

\begin{Theorem} \label{theo_evalder}
Let \\
\vspace{.1em}
$\bullet$ $q$ be a prime power,\\
\vspace{.1em}
$\bullet$ $F/\F_q$ be an algebraic function field,\\
\vspace{.1em}
$\bullet$ $Q$ be a degree $n$ place of $F/\F_q$,\\
\vspace{.1em}
$\bullet$ ${\mathcal D}$ be a divisor of $F/\F_q$,\\
\vspace{.1em}
$\bullet$ ${\mathcal P}=\{P_1,\ldots,P_N\}$ be a set of $N$ places of arbitrary degree,\\
\vspace{.1em}
$\bullet$ $u_1,\ldots,u_N$ be positive integers.\\
We suppose that $Q$ and all the places in $\mathcal P$ are not in the support of ${\mathcal D}$ and that:
\begin{enumerate}[a)]
	\item the map
	$$
	Ev_Q:  \left \{
	\begin{array}{ccl}
	\Ld{} & \rightarrow & \F_{q^n}\simeq F_Q\\
	f & \longmapsto & f(Q)
	\end{array} \right.
	$$ 
	is onto,
	\item the map
	$$
	Ev_{\mathcal P} :  \left \{
	\begin{array}{ccl}
	\Ld{2} & \longrightarrow & \left(\F_{q^{\deg P_1}}\right)^{u_1} \times \left(\F_{q^{\deg P_2}}\right)^{u_2} \times \cdots \times \left(\F_{q^{\deg P_N}}\right)^{u_N} \\
	f & \longmapsto & \big(\varphi_1(f), \varphi_2(f), \ldots, \varphi_N(f)\big)
	\end{array} \right.
	$$
	is injective, where the application $\varphi_i$ is defined by
	$$
	\varphi_i : \left \{
	\begin{array}{ccl}
	\Ld{2} & \longrightarrow & \left(\F_{q^{\deg P_i}}\right)^{u_i} \\
          f & \longmapsto & \left(f(P_i), f'(P_i), \ldots, f^{(u_i-1)}(P_i)\right)
	\end{array} \right.
 	$$
	with $f = f(P_i) + f'(P_i)t_i + f''(P_i)t_i^2+ \ldots + f^{(k)}(P_i)t_i^k + \ldots $, 
the local expansion at $P_i$ of $f$ in ${\Ld{2}}$, with respect to the local parameter~$t_i$. 
Note that we set ${f^{(0)} =f}$.
\end{enumerate}
Then 
$$
\mu^{sym}_q(n) \leq \displaystyle \sum_{i=1}^N \mu^{sym}_q(\deg P_i) \widehat{M}_{q^{\deg P_i}}(u_i).
$$
\end{Theorem}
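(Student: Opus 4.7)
The plan is to implement the classical Chudnovsky "lift--multiply--project" pipeline in the generalized setting with truncated local expansions, and to verify that every bilinear operation occurring in the pipeline is among those counted on the right-hand side. Given $x, y \in \F_{q^n}$, I would identify $\F_{q^n}$ with $F_Q$ via the residue map at $Q$ and, using the surjectivity of $Ev_Q$ in condition (a), fix once and for all an $\F_q$-linear section $s \colon F_Q \to \Ld{}$. Setting $f = s(x)$, $g = s(y)$, we have $f, g \in \Ld{}$, so $fg \in \Ld{2}$, and $(fg)(Q) = f(Q)g(Q) = xy$. The whole problem thus reduces to computing $fg$ inside $\Ld{2}$ from a bilinear procedure on $x$ and $y$, followed by the $\F_q$-linear evaluation $h \mapsto h(Q)$.

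Next I would exploit the injectivity of $Ev_{\mathcal P}$ from condition (b): the element $fg \in \Ld{2}$ is uniquely determined by the tuple of truncated local expansions $\varphi_i(fg) \in (\F_{q^{\deg P_i}})^{u_i}$ for $i=1,\ldots,N$, and the left inverse of $Ev_{\mathcal P}$ restricted to its image is an $\F_q$-linear map. Hence it suffices to compute each $\varphi_i(fg)$ from $\varphi_i(f)$ and $\varphi_i(g)$, since $f \mapsto \varphi_i(f)$ and $g \mapsto \varphi_i(g)$ are $\F_q$-linear in $x$ and $y$ respectively.

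The central observation is then that $\varphi_i(fg)$ is precisely the coefficient vector of the truncated polynomial product
$$
\Bigl(\sum_{k=0}^{u_i-1} f^{(k)}(P_i)\, t_i^k\Bigr) \cdot \Bigl(\sum_{k=0}^{u_i-1} g^{(k)}(P_i)\, t_i^k\Bigr) \pmod{t_i^{u_i}}
$$
in $\F_{q^{\deg P_i}}[t_i]$. By Definition~\ref{DefMu}, this truncated multiplication is realizable by $\widehat{M}_{q^{\deg P_i}}(u_i)$ bilinear multiplications in $\F_{q^{\deg P_i}}$, and each such multiplication in $\F_{q^{\deg P_i}}$ is in turn realized by a symmetric bilinear algorithm of length $\mu^{sym}_q(\deg P_i)$ over $\F_q$. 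Summing over $i = 1, \ldots, N$ yields the announced bound.

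The step I would treat most carefully is the preservation of symmetry in the assembled tensor decomposition. This requires that the section $s$ be applied identically to $x$ and to $y$ (so that the left and right sides of every bilinear product are obtained by the same $\F_q$-linear forms on the inputs), that the $\varphi_i$ be applied identically to $f$ and $g$, and that a symmetric algorithm for $\widehat{M}_{q^{\deg P_i}}(u_i)$ be substituted with a symmetric algorithm for the $\F_{q^{\deg P_i}}/\F_q$-multiplication of each of its bilinear products. Checking that these substitutions compose into a single symmetric tensor decomposition of $T_M$ for $\F_{q^n}/\F_q$ is the only subtle point; once this is verified, the length count is exactly the stated sum.
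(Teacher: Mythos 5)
Your argument is correct and is essentially the canonical proof of this result; note that the paper itself states this theorem without proof (it is imported from Cenk and \"Ozbudak), so there is no in-paper argument to diverge from. The lift--multiply--project pipeline, the identification of $\varphi_i(fg)$ with the truncated product of $\varphi_i(f)$ and $\varphi_i(g)$ (valid because the $P_i$ avoid the support of $\mathcal D$, so $f$ and $g$ are regular at each $P_i$), and your explicit tracking of symmetry through the composed tensor decomposition are exactly the required ingredients.
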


\noindent
Remark that the original algorithm in~\cite{Chudnovsky} given by D.V. and G.V. Chudnovsky regards the particular case $\deg P_i=1$ and $u_i=1$ for $i=1, \ldots, N$. The first generalization introduced by S.Ballet and R. Rolland in~\cite{Ballet4} allows the use of place of degree one and two, more precisely it concerns the case $\deg P_i=1 \hbox{ or }2$ and $u_i=1$ for $i=1, \ldots, N$. Next,  N. Arnaud introduced during his PhD~\cite{Arnaud}, the use of derivative evaluation which provides refinement of bounds of bilinear complexity. His work concerns the case $\deg P_i=1 \hbox{ or }2$ and $u_i=1\hbox{ or }2$  for $i=1, \ldots, N$. Cenk and \"Ozbudak generalized in~\cite{CenkOzbudak} Arnaud's work not only interpolating on places of arbitrary degree but also using derivative evaluation as desired. Thus less places of fixed degree are necessary to get the injectivity and the surjectivity of both evaluation maps. However, they use separately the degree $\deg P_{i}$ of a place $ P_{i}$ and its multiplicity $ \widehat{M}_{q}(u_{i})$. Recently, Randriambololona introduced in~\cite{Randriam} a new generalization of this algorithm which combine them.

\subsection{Generalization of Randriambololona}\label{GeneRandriam}

Randriambololona introduced in~\cite{Randriam} a possibly asymmetric version of this algorithm. Furthermore, he introduced a new quantity $ \mu_{q}(\deg P_{i},u_{i})$ to deal with both, the degree and the multiplicity, at the same time.

\begin{Definition}
For any integers $n,l\geq 1$ we consider the $\F_q$-algebra of polynomials in one indeterminate with coefficients in $\F_{q^n}$, truncated at order $l$:
$${\mathcal A}_q(n,l) = \F_{q^n}[t]/(t^l)$$
of dimension
$$dim_{\F_q}\mathcal A_q(n,l) = nl,$$
and we denote by 
$$\mu_q(n,l) = \mu(\mathcal A_q (n,l)/\F_q)$$
its bilinear complexity over $\F_q$ and by 
$$\mu^{sym}_q(n,l) = \mu^{sym}(\mathcal A_q (n,l)/\F_q)$$ 
its symmetric bilinear complexity over $\F_q$. 
\end{Definition}

\noindent
Note that when $l=1$, we have $\mu_q(n,1)=\mu_q(n)$ which corresponds to the bilinear complexity of multiplication in $\F_{q^n}$ over $\F_q$; 
and when $n=1$, we have $\mu_q(1,l)=\widehat{M}_{q^{\deg P_i}}(l)$ which represents the quantity defined by Cenk and Ozbudak~\cite{CenkOzbudak}.
Now, in order to make easier the presentation of Randriambololona's generalization, we choose to use the language of modern algebraic geometry 
emphasizing the geometric point of view even if everything could be equally expressed in the language of function fields in one indeterminate. 
Hence, by a thickened point in the algebraic curve $X$ defined over $\F_q$, we mean any closed subscheme of $X$ supported on a closed 
point (of arbitrary degree). If $Q$ is a closed  point in $X$, we denote by $\mathcal{I}_Q$ the sheaf of ideals defining it and for any integer 
$l\geq 1$, we let $Q^{[l]}$ be the closed subscheme of $X$ defined by the sheaf of ideals $(\mathcal{I}_Q)^l$. Then $Q^{[l]}$  
is the thickened point supported on $Q$. If $D$ is a divisor on $X$, we denote by $\mathcal{L}(D)=\Gamma(X,\mathcal{O}_X(D))$ 
its Riemann-Roch space. Then, we can present the generalization in~\cite{Randriam} which corresponds to the asymmetric version of algorithm of type Chudnovsky.

\begin{Theorem}\label{AlgoRandriam}
Let $\mathcal{C}$ be a curve of genus $g$ over $\F_q$, and let $n,l\geq 1$ be two integers. Suppose that $\mathcal{C}$ admits a closed point $Q$ of degree $\deg Q=n$. Let $G$ be an effective divisor on $\mathcal{C}$, and write 
$$ G=u_1P_1+\cdots +u_NP_N $$
where the $P_i$ are pairwise distinct closed points, of degree $\deg P_i=d_i$. Suppose there exist two divisors  $D_1,D_2$ on $\mathcal{C}$ such that:
\begin{enumerate}
\item[(i)] The natural evaluation  map
$$\mathcal{L}(D_1+D_2) \longrightarrow \prod_{i=1}^N \mathcal{O}_{\mathcal{C}}(D_1+D_2)\mid_{P_i^{[u_i]}}
$$
is injective.
\item[(ii)] The natural evaluation maps 
$$\mathcal{L}(D_1)  \longrightarrow \mathcal{O}_{\mathcal{C}}(D_1)\mid_{Q^{[l]}}~~~~~~~
\mathcal{L}(D_2)  \longrightarrow \mathcal{O}_{\mathcal{C}}(D_2)\mid_{Q^{[l]}}
$$
are surjective.
\end{enumerate}
Then $$\mu_{q}(n,l) \leq \sum_{i=1}^{N} \mu_{q}(d_{i},u_{i}). 
$$
In fact, we also have $\mu_q(n,l)\leq \mu(\prod_{i=1}^N \mathcal{A}_q(d_i,u_i)/\F_q)$. Moreover, if $D_1=D_2$, all these inequalities also hold for the symmetric bilinear complexity $\mu^{sym}$.\\
Sufficient numerical criteria for the hypotheses above to hold can be given as follows. A sufficient condition for the existence of $Q$ of degree $n$ on $\mathcal{C}$ is that 
$2g+1 \leq q^{(n-1)/2}(q^{1/2}-1)$, while sufficient conditions for $(i)$ and $(ii)$ are:
\begin{enumerate}
\item[(i')]  The divisor $D_1+D_2-G$ is zero-dimensional:
$$ l(D_1+D_2-G)=0.$$
\item[(ii')] The divisors $D_1-lQ$ and $D_2-lQ$ are non-special:
$$i(D_1-lQ)=i(D_2-lQ)=0.$$
\end{enumerate}
More precisely, $(i)$ and $(i')$ are equivalent, while $(ii')$ only implies $(ii)$ a priori.
\end{Theorem}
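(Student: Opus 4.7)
The plan is to build an explicit bilinear decomposition of multiplication in $\mathcal{A}_q(n,l)$ by factoring it through $\mathcal{L}(D_1) \otimes \mathcal{L}(D_2) \to \mathcal{L}(D_1+D_2)$, performing the actual bilinear work locally at the thickened points $P_i^{[u_i]}$, and pulling back to $Q^{[l]}$. The key identification is that, after choosing a local trivialization of $\mathcal{O}_{\mathcal{C}}(D_j)$ near any closed point $R$ of degree $d$, one has $\mathcal{O}_{\mathcal{C}}(D_j)|_{R^{[k]}} \simeq \mathcal{A}_q(d,k)$ in such a way that the product pairing $\mathcal{O}_{\mathcal{C}}(D_1)|_{R^{[k]}} \otimes \mathcal{O}_{\mathcal{C}}(D_2)|_{R^{[k]}} \to \mathcal{O}_{\mathcal{C}}(D_1+D_2)|_{R^{[k]}}$ corresponds to the ring multiplication of $\mathcal{A}_q(d,k)$ under compatible trivializations.

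Concretely, hypothesis $(ii)$ yields $\F_q$-linear sections $\sigma_1,\sigma_2$ of the evaluations $\mathcal{L}(D_j) \twoheadrightarrow \mathcal{A}_q(n,l)$. For $x,y \in \mathcal{A}_q(n,l)$ set $f_1=\sigma_1(x)$, $f_2=\sigma_2(y)$; the product $f_1 f_2 \in \mathcal{L}(D_1+D_2)$ reduces to $xy$ at $Q^{[l]}$. By $(i)$ the element $f_1 f_2$ is determined by the tuple of its local values $\bigl(f_1|_{P_i^{[u_i]}} \cdot f_2|_{P_i^{[u_i]}}\bigr)_i \in \prod_i \mathcal{A}_q(d_i,u_i)$, so there exists an $\F_q$-linear retraction $\rho$ recovering $f_1 f_2$ from this tuple. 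The full multiplication then reads: apply the linear map $(x,y) \mapsto \bigl(\sigma_1(x)|_{P_i^{[u_i]}},\sigma_2(y)|_{P_i^{[u_i]}}\bigr)_i$, perform the componentwise product in $\prod_i \mathcal{A}_q(d_i,u_i)$, apply $\rho$, and evaluate at $Q^{[l]}$. Only the middle step consumes bilinear multiplications, so $\mu_q(n,l) \le \mu\bigl(\prod_i \mathcal{A}_q(d_i,u_i)/\F_q\bigr) \le \sum_i \mu_q(d_i,u_i)$. When $D_1=D_2$, taking $\sigma_1=\sigma_2$ renders the whole construction symmetric, provided each local multiplication is implemented symmetrically, which gives the corresponding bound for $\mu^{sym}$.

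For the numerical criteria, the equivalence $(i)\Leftrightarrow(i')$ comes from the exact sequence $0 \to \mathcal{L}(D_1+D_2-G) \to \mathcal{L}(D_1+D_2) \to \prod_i \mathcal{O}_{\mathcal{C}}(D_1+D_2)|_{P_i^{[u_i]}}$ whose kernel is exactly the space whose vanishing $(i')$ asserts. For $(ii')\Rightarrow(ii)$, the thickening sequence $0 \to \mathcal{O}_{\mathcal{C}}(D_j-lQ) \to \mathcal{O}_{\mathcal{C}}(D_j) \to \mathcal{O}_{\mathcal{C}}(D_j)|_{Q^{[l]}} \to 0$ produces a connecting map into $H^1(\mathcal{C},\mathcal{O}_{\mathcal{C}}(D_j-lQ))$; by Serre duality this group has dimension $i(D_j-lQ)$, and vanishing is the content of $(ii')$. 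Existence of a closed point of degree $n$ under $2g+1 \le q^{(n-1)/2}(q^{1/2}-1)$ is the classical consequence of the Weil bound applied to a character-sum count of degree-$n$ places. The main subtlety I expect is the careful bookkeeping of the local trivializations of the line bundles $\mathcal{O}_{\mathcal{C}}(D_j)$ so that multiplication of sections corresponds, in the chosen frames, to the ring multiplication of $\mathcal{A}_q(d,k)$; this is essentially formal but it is what makes the bilinear decomposition and the asserted complexity bound coherent.
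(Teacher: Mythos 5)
Your argument is correct and is essentially the standard proof of this theorem as given in the cited source \cite{Randriam}: the paper itself states the result without proof, and your route --- sections of the surjections at $Q^{[l]}$, multiplication pushed into $\prod_i \mathcal{A}_q(d_i,u_i)$ via the local trivializations at the $P_i^{[u_i]}$, a linear retraction supplied by the injectivity hypothesis, and the cohomological reading of (i') and (ii') through the two short exact sequences --- is exactly the intended one. The only point you flag as a subtlety (compatibility of the trivializations of $\mathcal{O}_{\mathcal{C}}(D_1)$, $\mathcal{O}_{\mathcal{C}}(D_2)$ and $\mathcal{O}_{\mathcal{C}}(D_1+D_2)$ so that restriction of a product is the product of restrictions, and $\mathcal{O}_Q/\mathfrak{m}_Q^l\simeq\F_{q^n}[t]/(t^l)$ since finite fields are perfect) is indeed routine, so there is no gap.
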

\vspace{1em}

\noindent
The improvement suggested by Randriambololona in relation with bilinear complexity leads to the following inequality
$$ \mu_{q}(\deg P_{i},u_{i}) \leq  \mu_{q}(\deg P_{i})\widehat{M}_{q^{\deg P_{i}}}(u_{i})),$$
\noindent 
where  $ \mu_{q}(\deg P_{i},1) =  \mu_{q}(\deg P_{i})$  is the bilinear complexity of multiplication in $\F_{q^{\deg P_{i}}}$ over $\F_{q}$, and $ \mu_{q}(1,u_{i}) = \widehat{M}_{q^{\deg P_{i}}}(u_{i})$  is the complexity previously defined in Definition \ref{DefMu}. There exist examples where this inequality is strict, especially when we use places of higher degree with higher multiplicity. It is not the case in this paper. In fact, even if the formula $\mu_{q}(\deg P_{i},u_{i})$ is recursive, meaning that we can derive upper bounds, using  places of higher degree with higher multiplicity is more expensive than only use higher multiplicity with rational places.\\  

\begin{table}[h]
\begin{center}
\caption{\label{T3}Bounds for $ \mu_{q}(n)$ and $~\widehat{M}_{q}(n)$ for 1 $ \leq n \leq $ 8, and $q=2,3$.}
~~\\
\begin{tabular}{|c|c|c|c|c|c|c|c|c|}
\hline
$n$ & ~1~ &~ 2~ &~ 3~ &~ 4 ~& ~5~ &~ 6~ & ~7~ &~ 8 ~\\
\hline
$~\mu_{2}(n,1)=\mu_{2}(n)~ $ & 1 & 3 & 6 & 9 & 13 & 15 & 22 & 24 \\
\hline
$~\mu_{3}(n,1)=\mu_{3}(n)~ $ & 1 & 3 & 6 & 9 & 12 & 15 & 19 & 21\\
\hline   
$~\widehat{M}_{q}(n)~ $ & 1 & 3 & 5 & 8 & 11 & 15 & 19 & 24 \\ 
\hline
\end{tabular}
\end{center}
\end{table}
\vspace{1em}

\section{Construction of certain algorithms of type Chudnovsky}

\subsection{Strategies of construction}

So far, the strategy to obtain upper bounds for bilinear complexity of multiplication in $\F_{q^n}$ over $ \F_q$, has always been to apply algorithms of type Chudnovsky on infinite families (specially some towers) of algebraic function fields defined over a fixed finite field $\F_q$, with genus growing to infinity. More precisely, from a practical point of view, for any integer n, it consists in choosing the appropriate algebraic function field in the family, namely the first one satisfying the  conditions of Theorems~\ref{AlgoChud},~\ref{theo_evalder} or~\ref{AlgoRandriam}, in order to multiply in $\F_{q^n}$. This implies increasing the genus for few fixed degrees of places. Unfortunately this strategy has a weak point since growing the genus could hugely increase the complexity of the construction. However, there exists another strategy which corresponds to using the degree of freedom that remains: the degree of places. Technically, this approach consists in fixing the genus while increasing the degree of places. This new way, implied in the generalization of Arnaud and Cenk-Ozbudak, has never been investigated and requires introducing new complexity notions.

\begin{Definition}
For any integers $n,l\geq 1$, and for the $\F_q$-algebra $${\mathcal A}_q(n,l) = \F_{q^n}[t]/(t^l),$$ let us set
                     $$\mu_{q,g}(n,l) = ~\subrel{C} \min \mu(\mathcal A_q (n,l)/\F_q),$$
where $C$ is running over all curves of genus $g$ over $\F_q$. Then $\mu_{q,g}(n,l)$ is called the bilinear complexity over $\F_q$ of  the $\F_q$-algebra ${\mathcal A}_q(n,l)$ when the genus $g$ is fixed.  
\noindent
We denote $$\mu_{q,\mathcal{C}}(n,l) = \mu(\mathcal A_q (n,l)/\F_q),$$
the bilinear complexity over $\F_q$ of the $\F_q$-algebra ${\mathcal A}_q(n,l)$ when the model of the curve of genus $g$ is fixed. Quantities $\mu^{sym}_{q,g}(n,l) $ and $\mu^{sym}_{q,\mathcal{C}}(n,l) $ denote their associated symmetric bilinear complexity over $\F_q$. 
\end{Definition}

\noindent
Our purpose here is to develop this strategy in the case of elliptic function fields. The choice of algebraic curves of genus one was made for two main reasons:
\begin{enumerate}                                                                                                                     \item First of all, because the effective construction of such elliptic algorithms can be completed within a reasonable time. More precisely, we prove that the complexity of the construction of a symmetric elliptic bilinear multiplication algorithm in $\F_{q^n}$  is in  time polynomial in $n$.
\item Finally, elliptic curves are heavily used to construct cryptographic primitives. Indeed, using the same elliptic curve for both the multiplication and the cryptographic algorithms could improve the efficiency in secure embedded systems.
\end{enumerate}

\subsection{Elliptic Chudnovsky algorithms}

In this section, we improve a result obtained by Randriambololona in~\cite[Proposition 4.3]{Randriam} which, 
setting the parameter $\ell$ to 1, generalizes a result of Shokrollahi~\cite{Shokrollahi} and Chaumine~\cite{chau}.\\  
\noindent
Let $\mathcal{C}/\F_q$ be an elliptic curve defined over $\F_q$ with a chosen point $P_{\infty}$. The set $\mathcal{C}(\F_q)$ of rational points over $\F_q$  admits a structure of finite abelian group with identity element $P_{\infty}$ and a cardinal $N_1(\mathcal{C}(\F_q))$. 
Moreover, there is a map $ \sigma : Div(\mathcal{C}) \longrightarrow \mathcal{C}(\F_q)$ uniquely defined by the condition that each divisor $D$ of degree $d$ is linearly equivalent to the divisor $ \sigma(D) + (d-1)P_{\infty}$. This map $ \sigma $ is a group morphism, it passes to linear equivalence, and induces an isomorphism of the degree 0 class group ${Cl}^{0}(\mathcal{C})$. First, let us recall the result obtained by Randriambololona in~\cite[Proposition 4.3]{Randriam}.

\begin{Proposition}\label{Randriambololona}
Let $\mathcal{C}$ be an elliptic curve over $ \F_{q}$, $n$ be an integer. Suppose that $\mathcal{C}$ admits a closed point 
$Q$ of degree $n$. Let $G$ be an effective divisor on $\mathcal{C}$, and write
         $$ G = u_{1}P_{1}+ \cdots + u_{N}P_{N} $$
where $P_{i}$ are pairwise distinct closed points, of degree $\deg P_{i}$, so 
          $$\deg G = \sum_{i=1}^{N}\deg P_{i}.u_{i}.$$        
\noindent
Then 
     \begin{equation}\label{eqR}
          \mu_{q,\mathcal{C}}(n,1) \leq \sum_{i=1}^{N} \mu_{q}(\deg P_{i},u_{i}),     
     \end{equation}

 provided if one of the following conditions is satisfied: 
      \begin{enumerate}
         \item $\mathcal{C}$ admits at least three points of degree one and $\deg G \geq 2n$.
         \item  $\mathcal{C}$ admits two points of degree one and $\deg G \geq 2n$, 
         with $\sigma(G) \neq P_{\infty}$.                  
         \item $\mathcal{C}$ admits only two points of degree one and $ \deg G \geq 2n + 1 $.
         \item $\mathcal{C}$ admits only one point of degree one and $ \deg G \geq 2n + 3 $.
      \end{enumerate}                                      

\end{Proposition}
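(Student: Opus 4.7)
The plan is to apply Theorem~\ref{AlgoRandriam} with parameter $\ell = 1$ and to construct, in each of the four enumerated cases, divisors $D_1$ and $D_2$ on $\mathcal{C}$ that satisfy its sufficient numerical conditions (i') and (ii'). On a curve of genus one, Riemann--Roch turns these conditions into explicit degree-plus-class conditions: a divisor $D$ is non-special if and only if either $\deg D \geq 1$, or $\deg D = 0$ with $D \not\sim 0$. Condition (ii') thus amounts to $\deg D_j \geq n$ together with $\sigma(D_j) \neq \sigma(Q)$ whenever $\deg D_j = n$; and condition (i') amounts to $\deg(D_1+D_2) \leq \deg G$ together with $\sigma(D_1)+\sigma(D_2) \neq \sigma(G)$ whenever equality holds.

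The central tool is the group isomorphism $\mathrm{Cl}^0(\mathcal{C}) \simeq \mathcal{C}(\F_q)$ furnished by $\sigma$: once the degree of $D_j$ is fixed, its image $\sigma(D_j)$ can be prescribed to be any rational point by adjusting the class with differences of rational points. Hence, after choosing the degrees of $D_1$ and $D_2$ so that $\deg D_1 + \deg D_2$ matches the slack in $\deg G$, the argument reduces to selecting a pair $(a,b) = (\sigma(D_1), \sigma(D_2))$ in $\mathcal{C}(\F_q)^2$ avoiding the forbidden set $\{a = \sigma(Q)\} \cup \{b = \sigma(Q)\} \cup \{a+b = \sigma(G)\}$.

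I would then treat the four cases in order. In case (1), with $N_1 \geq 3$ rational points and $\deg G \geq 2n$, I take $\deg D_1 = \deg D_2 = n$; a short counting argument on $\mathcal{C}(\F_q)$ produces an admissible pair $(a,b)$, since each of the forbidden conditions removes only a bounded number of choices. In case (2), two rational points together with the extra assumption $\sigma(G) \neq P_\infty$ are exactly what is needed to keep the selection feasible: a direct verification splits according to whether $\sigma(Q) = P_\infty$ or $\sigma(Q) \neq P_\infty$. In case (3), the unit of extra slack in $\deg G$ permits setting $\deg D_2 = n+1$, which removes the constraint on $\sigma(D_2)$ from (ii') and leaves a single combinatorial step on the two available rational points.

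The main obstacle is case (4): with only $P_\infty$ rational, $\mathcal{C}(\F_q)$ is trivial and the class-selection game collapses. The remedy is to buy enough degree slack to render (i') and (ii') automatic, namely $\deg D_j \geq n+1$ for $j=1,2$ and $\deg(D_1+D_2) \leq \deg G - 1$, which together force $\deg G \geq 2n+3$. The remaining technical point is the explicit construction of effective representatives of $D_1, D_2$ with support disjoint from that of $Q$ and from the $P_i$; this uses the fact that under these degree hypotheses $\ell(D_j) \geq 2$, giving a positive-dimensional linear system in which avoiding a finite set of closed points is generic.
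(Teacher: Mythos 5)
Your argument is correct, and it is essentially the argument this result rests on: note first that the paper does not prove Proposition~\ref{Randriambololona} at all --- it is quoted verbatim from \cite[Proposition 4.3]{Randriam} --- so the only in-paper proof to compare against is that of the improved Proposition~\ref{BBT}, which follows the same template you use (reduce to the numerical criteria (i') and (ii') of Theorem~\ref{AlgoRandriam}, translate them via Riemann--Roch on a genus-one curve into degree conditions plus non-triviality of a class, then select classes through the isomorphism $\sigma : {Cl}^{0}(\mathcal{C}) \to \mathcal{C}(\F_q)$). Your reformulation of non-specialness ($\deg D \geq 1$, or $\deg D = 0$ with $D \not\sim 0$) and of (i') as $\sigma(D_1)+\sigma(D_2)\neq\sigma(G)$ in the boundary case $\deg G = 2n$ is exactly right, and the case-by-case counting goes through: with $N_1\geq 3$ the three forbidden conditions remove at most two candidates for $b$ once $a$ is fixed; with $N_1=2$ the hypothesis $\sigma(G)\neq P_{\infty}$ is precisely what saves the selection; the extra units of degree in cases (3) and (4) make the relevant conditions vacuous. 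The one substantive difference worth flagging is that you exploit the asymmetry $D_1\neq D_2$, which makes the class-selection a trivial pigeonhole; the paper's own Proposition~\ref{BBT} must take $D_1=D_2$, so the condition $a+b\neq\sigma(G)$ becomes $2a\neq\sigma(G)$ and the count of bad choices is governed by the $2$-torsion of $\mathcal{C}(\F_q)$ --- whence the inequality $\#\mathcal{C}(\F_q)[2]+1<N_1(\mathcal{C}(\F_q))$ and the special treatment of the four-point curves of Lemma~\ref{structellip}. This is exactly why the paper remarks that Assertion (1) of Proposition~\ref{Randriambololona} does not yield a symmetric algorithm. Your closing worry about choosing representatives with support disjoint from $Q$ and the $P_i$ is harmless but unnecessary in Randriambololona's sheaf-theoretic formulation, where the evaluation maps $\mathcal{L}(D_j)\to\mathcal{O}_{\mathcal{C}}(D_j)\mid_{Q^{[l]}}$ are defined for any divisor class.
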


\noindent
The above result gives sufficient conditions to construct an elliptic bilinear algorithm of type Chudnovsky (cf. Theorem \ref{AlgoRandriam}) 
from an elliptic  curve $\mathcal{C}$ and from an effective divisor $ G $  on $\mathcal{C}$. However, note that unlike the case of genus $0$ in \cite[Proposition 4.2]{Randriam}, it does not give sufficient conditions to construct a symmetric  elliptic bilinear algorithm because of Assertion (1). 
We propose an improvement of this result in two points:

\begin{itemize}
 \item firstly, we give on the one hand sufficient conditions to construct symmetric algorithms, and on the other hand, we give explicit equations of elliptic curves which are more convenient that the above conditions.
 \item Finally, our new result allows us to bound $\mu_{q,\mathcal{C}}(n,1)$, not only with the best known bounds for $\mu_{q}(\deg P_{i},u_{i})$ as Theorem~\ref{Randriambololona} suggests, but also with bounds for $\mu_{q,\mathcal{C}}(\deg P_{i},u_{i})$ derivated from the same elliptic curve $\mathcal{C}$.
\end{itemize}

In order to achieve this, we need to know the number of 2-torsion points on divisor class group as signaled by Cascudo, 
Cramer and Xing in \cite{cacrxi} (Cf. also \cite[Chapter 9]{casc}) relatively to the proof of Claim in \cite[Theorem 3.1]{shtsvl}. 
In particular, we need to know it when 
the number of rational points of the elliptic curve $E/\F_q$ defined over a finite field $\F_q$ of odd characteristic is equal 
to four. Note that if $E$ denotes an elliptic curve defined over a field $K$, and $\ell~\in \Z$ such that $\ell$ 
is prime with the characteristic of $K$, then the group of $\ell$-torsion points $E[\ell]$ is isomorphic to 
$\frac{\Z}{\ell\Z}\times \frac{\Z}{\ell\Z}$ but it holds on the algebraic closure. Hence, the only mean
to precisely know the subgroup of $\ell$-torsion points $E_q[\ell]$ over $K$ (and not only an upper bound) 
is to know the structure of $E(K)$.\\

\begin{Lemma}\label{structellip}
Let $q$ be a prime power with odd characteristic and let $E/\F_q$ be an elliptic curve defined over $\F_q$. Then the group $E(\F_q)$ of the $\F_q$-rational points of $E/\F_q$ is isomorphic to the finite abelian group $G=\frac{\Z}{2\Z}\times \frac{\Z}{2\Z}$ in the following cases: 

\begin{enumerate}
\item $q=3$ and $E/\F_q$ admits the following equation up to isomorphism: \[y^2 + y + 2x^3 + x + 1 = 0.\]
\item $q=5$ and $E/\F_q$ admits the following equation up to isomorphism: \[ y^2 + 4x^3 + 4x = 0.\]
\item $q=7$ and $E/\F_q$ admits the following equation up to isomorphism: \[ y^2 + 6x^3 + 1 = 0.\] 
\item $q=9$ and $E/\F_q$ admits the following equation up to isomorphism: \[ y^2 + (x + 1)y + 2x^3 + x^2 + ax + 1 = 0,~ \hbox{ where } \F_9=\F_3(a).\]
\end{enumerate}
\end{Lemma}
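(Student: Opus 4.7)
The plan is to verify, for each of the four listed cases, that the given curve $E/\F_q$ satisfies both
\[|E[2](\F_q)|=4 \quad\text{and}\quad |E(\F_q)|=4.\]
Since $E[2](\F_q)\hookrightarrow E(\F_q)$, these two facts together force $E(\F_q)=E[2](\F_q)$, a group of order $4$ and exponent $2$, which by the classification of finite abelian groups can only be $\Z/2\Z\times\Z/2\Z$. Thus everything reduces to two explicit finite checks per curve.

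First I would pin down the $2$--torsion. Write each equation in the general Weierstrass form
\[y^2+a_1xy+a_3y=x^3+a_2x^2+a_4x+a_6.\]
A point $P=(x_0,y_0)$ satisfies $P=-P$ iff $2y_0+a_1x_0+a_3=0$, which in odd characteristic uniquely determines $y_0$ as an affine function of $x_0$. Substituting back into the curve equation produces a cubic in $x_0$ alone, and I need to check that this cubic splits into three distinct linear factors over $\F_q$. For instance, case~(1) gives the cubic $x(x^2-1)$ over $\F_3$ with roots $0,\pm1$; case~(2) gives $x(x^2+1)$ over $\F_5$ with roots $0,\pm2$; case~(3) gives $x^3-1=(x-1)(x-2)(x-4)$ over $\F_7$; and in case~(4), after reduction one obtains a cubic of the form $x\bigl(2x^2-(1+a)\bigr)$ over $\F_9$, and one checks that $1+a$ is a nonzero square in the chosen model of $\F_9=\F_3[a]$. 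In each case, combined with the point at infinity, this gives four distinct $\F_q$-rational $2$--torsion points.

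Next I would establish $|E(\F_q)|=4$ by direct enumeration: for each $x_0\in\F_q$ count how many $y_0\in\F_q$ make the Weierstrass polynomial vanish (zero, one, or two depending on the discriminant of the quadratic in $y$), then add one for the point at infinity. For $q=3,5,7$ this is only a handful of cases; together with the Hasse bound $|E(\F_q)-q-1|\leq 2\sqrt q$ and the constraint that $|E(\F_q)|$ is divisible by $|E[2](\F_q)|=4$, one could even skip some computations since for $q=3$ only $|E|=4$ fits the window $1\le |E|\le 7$. For $q=9$ the enumeration runs over the $9$ elements of $\F_3(a)$.

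The genuinely tricky case is~(4). Here one has to fix a concrete presentation of $\F_9$, say $\F_9=\F_3[a]/(a^2+\alpha a+\beta)$ for a specified irreducible polynomial, and then verify by hand that (i) the cubic defining the $2$--torsion splits with three distinct roots in $\F_9$ (which amounts to $1+a$ being a nonzero square), and (ii) the point count across the nine fibres over $x\in\F_9$ gives exactly three affine points plus the point at infinity. These are routine but error-prone computations, and the main work of the proof will be carrying them out carefully while keeping track of the chosen isomorphism $\F_9\cong\F_3(a)$. Once these arithmetic checks are done, the structural conclusion $E(\F_q)\cong\Z/2\Z\times\Z/2\Z$ is immediate.
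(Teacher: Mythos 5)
Your approach is correct, and the computations you exhibit for $q=3,5,7$ check out; I also verified that with the presentation $a^2+2a+2=0$ of $\F_9$ the curve in case (4) becomes $Y^2=x(x-a)(x+a)$ after completing the square, so the full $2$-torsion is rational and the fibre-by-fibre count gives exactly $4$ points. Your route, however, is genuinely different from the paper's. The paper first uses the Hasse bound to note that four rational points forces $q\in\{3,5,7,9\}$, then cites a structure theorem of Vladut (cf.\ Tsfasman--Vladut) to assert that for $q\neq 3$ an elliptic curve with exactly four rational points has group $\frac{\Z}{2\Z}\times\frac{\Z}{2\Z}$, so that for those $q$ only the point count of each listed equation needs checking; only for $q=3$ does it separately verify non-cyclicity. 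You instead verify rationality of the full $2$-torsion curve by curve, by splitting the $2$-division cubic over $\F_q$, and combine this with the point count via $E[2](\F_q)\hookrightarrow E(\F_q)$. Your method is more elementary (no external classification result), and it is also the safer one: the group structure is \emph{not} determined by $q$ and the point count alone. For instance $y^2=x^3+x+2$ over $\F_5$ is nonsingular with exactly four rational points, but its $2$-division polynomial $x^3+x+2=(x+1)(x^2+4x+2)$ has a single root in $\F_5$, so that curve's group is $\Z/4\Z$; hence the citation the paper leans on cannot by itself settle the cases $q=5,7,9$ for an arbitrary curve with four points, whereas your direct $2$-torsion computation does settle them for the specific curves listed. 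The only part of your write-up left unexecuted is case (4), which you have correctly reduced to checking that $1+a$ is a nonzero square in the chosen model of $\F_9$ (note this does depend on the defining polynomial of $a$: it holds for $a^2+2a+2=0$ but fails for $a^2+1=0$, so the ``up to isomorphism'' in the statement is doing real work here) together with a nine-fibre point count; both checks succeed, so your argument closes.
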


\begin{proof}
Let $N_1(E(\F_q))$ denote the number of $\F_q$-rational points of the elliptic curve $E$ defined over $\F_q$.
It is known that the number of $\F_q$-rational points of  $E/\F_q$ is equal to $N_1(E(\F_q))=q+1-m$ 
where the integer $m$ is the trace of the Frobenius which satisfies $\mid m \mid\leq2\sqrt{q}$. Hence, if $N_1(E(\F_q))=4$,
the only concerned finite fields are $\F_3$, $\F_5$, $\F_7$, and $\F_9$. In this case, we have: $m=0$ if $q=3$, 
$m=2$ if $q=5$, $m=4$ if $q=7$, and $m=6$ if $q=9$. Then, by Theorem 2.1 in~\cite{vlad} (cf. also \cite[Theorem 2.4.31]{ShpaVla}), 
if $q\neq 3$, $E(\F_q)$ is isomorphic to $\frac{\Z}{2\Z}\times \frac{\Z}{2\Z}$, else $E(\F_q)$ is either cyclic or isomorphic 
to $\frac{\Z}{2\Z}\times \frac{\Z}{2\Z}$. It is easy to chek that each above curves defined over the corresponding finite field has 
four  rational points. Moreover, the curve $y^2 + y + 2x^3 + x + 1 = 0$ defined over $\F_3$ is not cyclic. 
\end{proof}

\noindent 
We start by stating the first of our three main results, namely Proposition~\ref{BBT} which is an improvement of Proposition~\ref{Randriambololona}. Then, we prove in Theorem~\ref{n2q} that our new Proposition~\ref{BBT} allows us to construct, asymptotically with respect to the integer $n$, multiplication algorithms with symmetric bilinear complexity in $O(n(2q)^{\log_q^*(n)})$. Finally, Theorem~\ref{complexity} shows that the complexity of the construction of such algorithms is in time polynomial in $n$.

\begin{Proposition}\label{BBT}
Let $q$ be a prime power and let $\mathcal{C}$ be an elliptic curve defined over $ \F_{q}$. 
Then, for  any integer $n$ such that $n\geq 7$ if $q=2$, $n\geq 4$ if $q=3$ and $n\geq 3$ if $q\geq 4$, there exists a symmetric elliptic bilinear algorithm of type Theorem~\ref{AlgoRandriam} constructed from the curve $\mathcal{C}$ 
and from an effective divisor $$ G = u_{1}P_{1}+ \cdots + u_{N}P_{N} $$  
on $\mathcal{C}$ such that 
$$ \mu^{sym}_{q,\mathcal{C}}(n,1) \leq \sum_{i=1}^{N} \mu^{sym}_{q,\mathcal{C}}(\deg P_{i},u_{i}),$$ 
\noindent
where the $P_i$ are $N$ pairwise distinct  closed points, of degree  $\deg P_i=d_i$, and the $u_i$ 
are strictly positive integers, provided one of the following conditions is satisfied: 

a) the curve  $\mathcal{C}$ admits one of the following equations up to isomorphism: 
\[
 \begin{array}{l}
    y^2+y+(x^3+x+1)=0,~  \hbox{ if }~ q=2,\\
    y^2-(x^3+2x+2)=0,~   \hbox{ if }~ q=3,\\
    y^2+y+(x^3+a)=0,~   \hbox{ if }~ q=4~\hbox{and}~\F_4=\F_2(a),\\
 \end{array}
\]
and
$$\sum_{i=1}^{N}u_id_i\geq 2n+3.$$


b) The curve  $\mathcal{C}$ admits one of the following equations up to isomorphism: 
\[
\begin{array}{l}
 y^2+xy+x^3+x^2+1=0 ~ \hbox{ if } ~q=2,\\
 y^2-(x^3+2x^2+2)=0 ~  \hbox{ if }~ q=3,\\
 y^2+xy+(x^3+ax^2+1)=0 ~ \hbox{ if }~ q=4~\hbox{and}~ \F_4=\F_2(a),\\
 y^2-(x^3+2x)=0 ~        \hbox{ if }~ q=5,\\
\end{array}
\]
and either
$$\sum_{i=1}^{N}u_id_i\geq 2n+1$$ or  $$\sum_{i=1}^{N}u_id_i=2n~ \hbox{ with }~ \sigma(G) \neq P_{\infty}.$$

c) The curve  $\mathcal{C}$ admits one of the following equations up to isomorphism: 
\[
\begin{array}{l}
  y^2 + y + 2x^3 + x + 1 = 0~ \hbox{ if }~ q=3,\\
  y^2 + 4x^3 + 4x = 0~ \hbox{ if }~ q=5,\\
  y^2 + 6x^3 + 1 = 0~ \hbox{ if }~ q=7,\\
  y^2 + (x + 1)y + 2x^3 + x^2 + ax + 1 = 0~\hbox{ if }~ q=9 ~\hbox{and}~ \F_9=\F_3(a),\\
\end{array}
\] 
 and $$\sum_{i=1}^{N}u_id_i\geq 2n+1.$$

d) The equation of the curve  $\mathcal{C}$ is different from the above cases up to isomorphism and $$\sum_{i=1}^{N}u_id_i\geq 2n.$$ 

Particularly for $q = 2$, elliptic curves  are
\[
\begin{array}{l}
 y^2 + y + x^3 = 0 \\
 y^2 + y + x^3 + x = 0 \\ 
 y^2 + xy + x^3 + 1 = 0,
\end{array}
\] 

and for $ q= 3$, we obtain
\[
 \begin{array}{l}
 y^2 + 2x^3 + 2x = 0 \\
 y^2 + 2x^3 + x + 2 = 0 \\
 y^2 + 2x^3 + 2x^2 + 2 = 0\\ 
 y^2 + 2x^3 + 2x^2 + 1 = 0\\
 y^2 + 2x^3 + x^2 + 2 = 0. 
 \end{array}
\]
\end{Proposition}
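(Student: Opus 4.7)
The plan is to apply Theorem~\ref{AlgoRandriam} in its symmetric specialization, setting $l=1$ and $D_1=D_2=D$, and then to translate the numerical conditions $(i')$ and $(ii')$ into purely group-theoretic constraints on the images $\sigma(D),\sigma(G)\in\mathcal{C}(\F_q)$. Since $\mathcal{C}$ has genus one, any divisor of strictly positive degree is non-special, and a degree-zero divisor $D'$ is non-special precisely when $\sigma(D')\neq P_\infty$. Taking $\deg D=n$, hypothesis $(ii')$ at $Q$ therefore collapses to $\sigma(D)\neq \sigma(Q)$, while $(i')$ is automatic as soon as $\deg G>2n$ and reduces to $\sigma(2D)\neq\sigma(G)$ when $\deg G=2n$.

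First I would verify that each stated lower bound on $n$ forces the existence of a closed point $Q$ of degree $n$ on $\mathcal{C}$ via the Weil-type criterion $2g+1\le q^{(n-1)/2}(q^{1/2}-1)$ of Theorem~\ref{AlgoRandriam}: specialised to $g=1$, this numerical inequality reproduces exactly the thresholds $n\ge 7$ for $q=2$, $n\ge 4$ for $q=3$, and $n\ge 3$ for $q\ge 4$. Then I would count admissible linear equivalence classes for $D$. Through the isomorphism $\sigma:\mathrm{Cl}^0(\mathcal{C})\xrightarrow{\sim}\mathcal{C}(\F_q)$, classes of degree-$n$ divisors are parametrised by $\mathcal{C}(\F_q)$: the constraint $\sigma(D)\neq\sigma(Q)$ excludes one class, and when $\deg G=2n$ the constraint $\sigma(2D)\neq\sigma(G)$ excludes at most $|E[2](\F_q)|$ further classes, so a suitable $D$ exists whenever $N_1(\mathcal{C}(\F_q))$ is large enough with respect to the $2$-torsion.

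The four cases then follow from an analysis on $N_1$ and on the structure of the $2$-torsion. When $N_1=1$ (case (a)) no divisor of degree $n$ avoids $\sigma(Q)=P_\infty$, so $\deg D$ must be raised to $n+1$; since every divisor of degree $2n+2$ on such a curve is linearly equivalent, the threshold $\deg G\ge 2n+3$ becomes necessary. When $N_1=2$ (case (b)) or $\mathcal{C}(\F_q)\simeq\Z/2\Z\times\Z/2\Z$ (case (c)), every element of $\mathcal{C}(\F_q)$ is $2$-torsion so $\sigma(2D)=P_\infty$ identically, which directly yields the stated thresholds together with the auxiliary condition $\sigma(G)\neq P_\infty$ in case (b). In the residual case (d) the quotient of $\mathcal{C}(\F_q)$ by $E[2](\F_q)$ is non-trivial, and a short counting argument shows that among the $N_1-1$ classes avoiding $\sigma(Q)$ at least one also satisfies $\sigma(2D)\neq\sigma(G)$, so that $\deg G\ge 2n$ suffices. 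The refinement from $\mu^{sym}_q$ to $\mu^{sym}_{q,\mathcal{C}}$ is then automatic, since the construction uses only data on $\mathcal{C}$ and the same elliptic strategy can be re-applied recursively to each factor $\mu^{sym}_{q,\mathcal{C}}(\deg P_i,u_i)$.

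The main obstacle will be the explicit identification of the curves listed in cases (a)--(c), that is, the enumeration up to $\F_q$-isomorphism of all elliptic curves over $\F_2,\F_3,\F_4,\F_5,\F_7,\F_9$ whose group of rational points has order at most $4$. This amounts to a finite verification based on the Hasse--Weil bound $|N_1-q-1|\le 2\sqrt q$, refined by Lemma~\ref{structellip} in order to distinguish, inside the stratum $N_1=4$, the curves whose rational group is $\Z/2\Z\times\Z/2\Z$ (case (c)) from the cyclic ones (which fall into case (d)). Once these tables are produced, the claimed inequality is an immediate application of the symmetric specialization of Theorem~\ref{AlgoRandriam}.
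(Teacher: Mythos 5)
Your proposal is correct and follows essentially the same route as the paper: the symmetric specialization of Theorem~\ref{AlgoRandriam} with $D_1=D_2$, the translation of $(i')$ and $(ii')$ into conditions on $\sigma(D)$, $\sigma(2D)$ and $\sigma(G)$, the counting of excluded linear-equivalence classes against $N_1(\mathcal{C}(\F_q))$ and the rational $2$-torsion (the inequality $|\mathcal{C}(\F_q)[2]|+1<N_1$), and the case split according to $N_1=1$, $N_1=2$, full $2$-torsion with $N_1=4$, and the rest. The only differences are presentational: the paper exhibits $D=\mathcal{R}+Q$ explicitly in cases (b) and (c) rather than arguing by class counting, and it draws the curve classifications from the class-number-one and class-number-two literature together with Lemma~\ref{structellip} instead of the direct finite enumeration you sketch.
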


\noindent
\begin{proof} To begin with, it is necessary to prove that for any $q$ and any integer $n\geq 3$, there exists a closed point $Q$ 
of degree $n$ and a divisor $D$ such that both evaluation maps, in a symmetric algorithm of type Theorem~\ref{AlgoRandriam}, 
are surjective and injective. For the sake of simplicity, we use the language of algebraic function fields. It is well known, by Lemma 2.1 
of~\cite{chau}, that for any integer $n\geq 3$ and for any prime power $q\geq 4$, all the elliptic function fields defined over $\F_q$  
have at least a place $Q$ of degree $n$. The cases $q=3$ and $q=2$ had still to be dealt with. 
For $q=3$ and $n\geq 4$ and for $q=2$ and $n\geq 7$, we have: $n\geq 2log_q(\frac{3q^{1/2}}{q^{1/2}-1})$, which proves
that by \cite[Corollary V.2.10 (c)]{Stich}, there exists at least a place Q of degree respectively  $n\geq7$ for $q=2$ and $n\geq4$
 for $q=3$ for any elliptic algebraic function field $E/\F_q$ defined over $\F_q$.
Let us prove now that for any $q$ and for any elliptic function field $E/\F_q$  defined over $\F_q$, there exists a divisor $D$ such that we can construct a symmetric algorithm of type Theorem~\ref{AlgoRandriam}. 
\begin{itemize}
 \item Proof of a). If $N_1(\mathcal{C}(\F_q))=1$, we know by~\cite{macr} and~\cite{maqu}, that the elliptic solutions to the divisor class number one problem are given by the equations  of the case a). Moreover, as $\deg G\geq 2n+3$, it is sufficient to take a divisor $D=D_1=D_2$ of degree $n+1$ and conditions (i') and (ii') of Theorem~\ref{AlgoRandriam} are trivially satisfied because respectively $\deg(2D-G)<0$ and $\deg (D-Q)>2g-2$ with $g=1$.
 \item Proof of b). If $N_1(\mathcal{C}(\F_q))=2$, we know  by~\cite{Brigand1} and~\cite{Brigand2} that the elliptic solutions to the divisor class number two problem are given by the equations  of the case b). Moreover, it does mean that there exists a divisor ${\mathcal R}$ of degree zero which is not linearly equivalent to the divisor zero. Then by taking $D=D_1=D_2= {\mathcal R}+Q$, the condition (ii') of Theorem~\ref{AlgoRandriam} is satisfied. Moreover, it also means that the Jacobian of $\mathcal{C}/\F_q$ is of 2-torsion, so $\sigma(2D-G)=\sigma(2D)+\sigma(-G)=\sigma(G)$. Then if  $\sigma(G)\neq P_{\infty}$ and $\deg G=2n$, then $2D-G$ is not linearly equivalent to the divisor zero which proves that $2D-G$ is non special of degree zero and the condition (i') of Theorem~\ref{AlgoRandriam} is satisfied. Else,  $\deg G\geq2n+1$ and thus $\deg (2D-G)<0$ and $2D-G$ is trivially of dimension zero which implies the condition (i') and proves the case b).
\item Proof of c) and d). If $N_1(\mathcal{C}(\F_q))\geq 3$, it is sufficient to prove the following inequality by \cite[Chapter 9]{casc} relatively to 
the proof of Claim in \cite[Theorem 3.1]{shtsvl}(cf. also \cite{cacrxi}):
$$\mathcal{C}(\F_q)[2]+1<N_1(\mathcal{C}(\F_q)$$ where $\mathcal{C}(\F_q)[2]$ denotes the number of 2-torsion rational points 
of the elliptic curve $\mathcal{C}/\F_q$.  It is known that the number of 2-torsion points of an elliptic curve defined over a finite field $\F_q$ 
is at most four (cf.~\cite{wate} and~\cite{vlad}). Hence the inequality is satisfied for any elliptic curve having at least six rational points. 
We discuss thereafter, particular cases where $ 3 \leq N_1(\mathcal{C}(\F_q) \leq 5$.
 \begin{itemize}
\item If  $N_1(\mathcal{C}(\F_q))=3 \hbox{ or } 5$, there is no nontrivial 2-torsion point and so $\mathcal{C}(\F_q))[2]=1$ and 
the inequality is also satisfied.
\item If $N_1(E/\F_q))=4$ and
\begin{enumerate}
 \item if the characteristic of $\F_q$ is even, then by a general theorem of Weil (cf.~\cite[Theorem 11.12]{rose}) applied to elliptic abelian varieties, the number of 2-torsion points of an elliptic curve defined over a finite field $\F_q$ is at most two and the inequality is also satisfied.
 \item if the characteristic of $\F_q$ is odd, only all elliptic curves in Lemma~\ref{structellip} admit four 2-torsion points and the inequality is not satisfied. However, there exists a divisor ${\mathcal R}$ of degree zero which is not linearly equivalent to divisor zero. Consequently, by taking $D=D_1=D_2= {\mathcal R}+Q$, the condition (ii') of Theorem~\ref{AlgoRandriam} is satisfied as well as the condition (i') since $\deg (2D-G)<0$, which gives c) and the proof is complete.
\end{enumerate}
\end{itemize}
\end{itemize}
\end{proof}

\begin{Definition}
The iterated logarithm of $n$, written $ \log^*_q(n)$  defined by the following recursive function:

$$
\log^*_q(n) = \left\{
    \begin{array}{ll}
        0 & \mbox{if } n \leq 1 \\
        1 + \log^*_q(\log_q(n)) & \mbox{otherwise,}
    \end{array}
\right.
$$

\noindent
corresponds to the number of times the logarithm function must be iteratively applied to $n$ before the result is less than or equal to 1. 
\end{Definition}

\begin{Theorem}
Let $q$ be a prime power and let $\mathcal{C}$ be an elliptic curve defined over $ \F_{q}$. 
Then, for  any integer $n$ such that $n\geq 7$ if $q=2$, $n\geq 4$ if $q=3$ and $n\geq 3$ if $q\geq 4$, there exists a symmetric elliptic bilinear algorithm of type Theorem~\ref{AlgoRandriam} constructed from the curve $\mathcal{C}$ such that
$$ \mu^{sym}_{q,\mathcal{C}}(n)~\in~O\left(n(2q)^{\log_q^*(n)} \right). $$
\noindent
Notice that $(2q)^{\log_q^*(n)}$ is a very slowly growing function, as illustrated in Table~\ref{IL}.

\begin{table}[h]
\begin{center}
\caption{Values for $(2q)^{\log_q^*(n)}$ for $q=2$ and $ n \leq 2^{65536}$.}\label{IL} \vspace{0,1cm}
~~\\
\begin{tabular}{|c|c|c|}
\hline
$n$ & $\log^*(n)$ & $(2q)^{\log_q^*(n)}$\\
\hline
$(1,2]$ & 1  & 4\\
\hline
$(2,4]$ & 2 & 16\\
\hline
$(4,16]$ & 3 & 64 \\
\hline
$(16,65536]$ & 4 & 256\\
\hline
$(65536, 2^{65536}]$ & 5 & 1024\\
\hline
\end{tabular}
\end{center}
\end{table}

\label{n2q} 
\end{Theorem}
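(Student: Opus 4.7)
The plan is to apply Proposition~\ref{BBT} recursively on the same fixed elliptic curve $\mathcal{C}/\F_q$, reducing each multiplication problem in $\F_{q^n}$ to $N$ problems in $\F_{q^d}$ for a degree $d$ only logarithmic in $n$. Since iterated logarithms shrink $n$ to a constant in $\log_q^*(n)$ steps, the recursion will terminate in that many levels and produce the stated bound.

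Given $n$ large, I would choose $d = \lceil\log_q(2n+3)\rceil$, bumped up to the base threshold of Proposition~\ref{BBT} (namely $7$ for $q=2$, $4$ for $q=3$, or $3$ for $q\geq 4$) whenever it would fall below. By the Hasse--Weil estimate, the number of closed points of degree exactly $d$ on $\mathcal{C}$ satisfies
$$B_d(\mathcal{C}) = \frac{q^d}{d}\Bigl(1 + O(q^{-d/2})\Bigr),$$
which, for the chosen $d$, exceeds $N := \lceil(2n+3)/d\rceil$ once $n$ is moderately large. Picking $N$ such points $P_1,\dots,P_N$, setting all multiplicities $u_i=1$, and forming $G = P_1+\cdots+P_N$ then gives $\deg G \geq 2n+3$, meeting the strictest numerical hypothesis of Proposition~\ref{BBT}, so that
$$T(n) \leq N\cdot T(d), \qquad T(n) := \mu^{sym}_{q,\mathcal{C}}(n),$$
with $N\leq 2n/d+O(1)$ and $d\leq \log_q(n)+O(1)$.

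Iterating with $n_0=n$ and $n_{j+1}=d_j$, after $k=\log_q^*(n)$ steps $n_k$ drops below the base threshold, where $T(n_k)$ is a constant $C_q$ depending only on $q$. The recurrence unfolds to
$$T(n) \leq C_q \cdot n \cdot \prod_{j=0}^{k-1}\Bigl(2 + \frac{3+n_{j+1}}{n_j}\Bigr),$$
so it remains to bound this product by a constant times $(2q)^k$. This is where the base $2q$, rather than the tighter asymptotic $2$, enters: for large $n_j$ each factor is $2+o(1)$, but for the last few levels, where $n_j$ is close to the base threshold, the factor can temporarily exceed $2$; the slack $2q\geq 2$ comfortably absorbs these finitely many oversized factors into a clean uniform bound by induction on $k$.

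The main obstacle will be the Hasse--Weil bookkeeping at the first few recursion levels, especially when $q\in\{2,3\}$: there the asymptotic estimate $B_d \sim q^d/d$ is not tight enough near the base threshold, so one must appeal to the explicit zeta function of the fixed curve $\mathcal{C}$ (for instance one of the concrete models listed in Proposition~\ref{BBT}) and verify the existence of $N$ degree-$d$ places directly in a finite number of small cases, which can also be consolidated into the constant $C_q$.
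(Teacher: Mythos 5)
Your proposal follows essentially the same route as the paper: fix the curve, take all (or enough) places of a single degree $d$ with multiplicity one where $d$ is the smallest integer with $q^d$ of order $2n$, apply Proposition~\ref{BBT} to get $\mu^{sym}_{q,\mathcal{C}}(n)\leq B_d\,\mu^{sym}_{q,\mathcal{C}}(d)$ with $B_d$ controlled by the Hasse--Weil count, and iterate $\log_q^*(n)$ times, absorbing the constant-factor slack into the base $2q$. The only cosmetic difference is that you uniformly impose $\deg G\geq 2n+3$ to cover all curve models at once, whereas the paper reduces without loss of generality to the case $\deg G\geq 2n$; both are sound.
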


\begin{proof}
Without loss of generality, let $C$ be an elliptic curve which the model does not appear in case a) and b) of  Theorem~\ref{BBT}. Let $G$ be the divisor on $C$ such that 
$$ G = u_{1}P_{1}+ \cdots + u_{N}P_{N}.$$

\noindent 
Concentrating on the worst case, we can assume that
\begin{itemize}
 \item we do not use derivative evaluation, that is $ u_i = 1 ~~\mbox{for}~~ 1\leq i \leq N $,
 \item we only use places of a fixed degree, that is $\deg(P_1)= \ldots = \deg(P_N)=d_1$.
\end{itemize}
With these assumptions $ G = P_1 + \cdots + P_{B_{d_1}} $, where $B_{d_1}$ denotes the number of places of degree $d_1$. From Theorem~\ref{BBT}, if $C$ is one of the elliptic curve of case d) and $\deg(G)=d_1B_{d_1}\geq 2n$, then
\begin{equation}\label{Randriam2}
 \mu^{sym}_{q,\mathcal{C}}(n) \leq \sum_{i=1}^{B_{d_1}}\mu^{sym}_{q,\mathcal{C}}(\deg P_i)= B_{d_1}\mu^{sym}_{q,\mathcal{C}}(d_1).
\end{equation}
\noindent
From~\cite[Corollary 5.2.10]{Stich} applied to elliptic curves, we know that  $B_{d_1}$ verifies 
$$ \frac{q^{d_1}}{{d_1}}-9\frac{q^{{d_1}/2}}{{d_1}} < B_{d_1} < \frac{q^{d_1}}{{d_1}}+9\frac{q^{{d_1}/2}}{{d_1}}.$$ 
\noindent
Asymptotically, $B_{d_1}~\in~O\left( \frac{q^{d_1}}{d_1}\right)$ and then $\deg(G)~\in~O(q^{d_1})$.
\noindent
Let ${d_1}$ be the smallest integer such that $q^{d_1} \geq 2n  $, then $ q^{{d_1}-1} < 2n $ and we have $d_1 ~\in~O \left(\log_q(2n) \right)$.
\noindent
Thus
$$ \mu^{sym}_{q,\mathcal{C}}(n) ~\in~O \left( B_{d_1} \mu^{sym}_{q,\mathcal{C}}(d_1) \right)$$
and then
$$ \mu^{sym}_{q,\mathcal{C}}(n) ~\in~ O \left( \frac{2nq}{d_1} \mu^{sym}_{q,\mathcal{C}}(d_1) \right).$$
\noindent
Using recursively the process, we obtain
$$ \mu^{sym}_{q,\mathcal{C}}({d_1}) ~\in~O \left( \frac{2{d_1}q}{d_2}\mu^{sym}_{q,\mathcal{C}}(d_2) \right),$$
\noindent 
where $d_2~\in~O \left(\log_q(2d_1)\right)$.
\noindent
With this procedure, we have
$$ \mu^{sym}_{q,\mathcal{C}}(n) ~\in~O \left( \frac{2nq}{{d_1}}\cdot\frac{2d_1q}{d_2}\cdot~\cdots~\cdot\frac{2d_{k-2}q}{d_{k-1}}\cdot2d_{k-1}q\frac{\mu^{sym}_{q,\mathcal{C}}(d_k)}{d_k} \right), $$
\noindent
with $d_i ~\in~ O \left(\log_q(2d_{i-1}) \right)$, for $1 \leq i \leq k$, and consequently
$$ \mu^{sym}_{q,\mathcal{C}}(n) ~\in~O \left( n(2q)^{k} \cdot \frac{\mu^{sym}_{q,\mathcal{C}}(d_k)}{d_k} \right).$$
\noindent
Let $k=\log_q^*(2n)$, then we have
$$ d_k ~\in~ O \left(\underbrace{\log_q(\log_q(\ldots(\log_q}_{k~terms}(2n))\ldots))\right) \leq 1,$$
\noindent
and thus 
$$\frac{\mu^{sym}_{q,\mathcal{C}}(d_k)}{d_k} \leq 1. $$
\noindent
Finally $$ \mu^{sym}_{q,\mathcal{C}}(n)~\in~ O\left( n \cdot (2q)^{\log^*_q(n)}\right).$$
\end{proof}

\begin{Corollary}\label{coro1}
For  any integer $n$ such that $n\geq 7$ if $q=2$, $n\geq 4$ if $q=3$ and $n\geq 3$ if $q\geq 4$, there exists a symmetric elliptic bilinear algorithm of type Theorem~\ref{AlgoRandriam} constructed from a curve of genus one and from an effective divisor $$ G = u_{1}P_{1}+ \cdots + u_{N}P_{N}, $$
on this curve, where the $P_i$ are $N$ pairwise distinct  closed points, of degree  $\deg P_i=d_i$, and the $u_i$ 
are strictly positive integers, such that 
 $$ \mu^{sym}_{q,1}(n)~\in~ O\left( n \cdot (2q)^{\log^*_q(n)}\right).$$
\end{Corollary}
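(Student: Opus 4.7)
The plan is to derive this corollary as an immediate consequence of Theorem~\ref{n2q} together with the definition of $\mu^{sym}_{q,1}(n)$. Recall that by the definition introduced in Section~3.1,
$$\mu^{sym}_{q,1}(n) = \min_{\mathcal{C}} \mu^{sym}_{q,\mathcal{C}}(n),$$
where the minimum runs over all elliptic curves $\mathcal{C}$ defined over $\F_q$. Consequently, it suffices to exhibit a single elliptic curve $\mathcal{C}$ over $\F_q$ for which the asymptotic bound $\mu^{sym}_{q,\mathcal{C}}(n) \in O(n(2q)^{\log^*_q(n)})$ already holds, and the corresponding inequality will transfer to the infimum.

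First I would select any elliptic curve $\mathcal{C}$ over $\F_q$ falling into case d) of Proposition~\ref{BBT}. Such a curve exists for every admissible $q$: for $q \in \{2,3\}$ the statement of Proposition~\ref{BBT} lists explicit case d) equations, and for $q \geq 4$ the finite collection of Weierstrass models excluded in cases a), b), c) does not exhaust the set of $\F_q$-isomorphism classes of elliptic curves over $\F_q$. Next I would apply Theorem~\ref{n2q} to this curve: it yields, for every $n$ in the stated range, a symmetric elliptic bilinear algorithm of type Theorem~\ref{AlgoRandriam} built from $\mathcal{C}$ and from an effective divisor $G = u_1 P_1 + \cdots + u_N P_N$ (whose $P_i$ are pairwise distinct closed points of degree $d_i$ and whose $u_i$ are positive integers), together with the asymptotic bound
$$\mu^{sym}_{q,\mathcal{C}}(n) \in O\!\left(n\,(2q)^{\log^*_q(n)}\right).$$

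The conclusion then follows by combining this bound with the inequality $\mu^{sym}_{q,1}(n) \leq \mu^{sym}_{q,\mathcal{C}}(n)$, which is immediate from the minimum in the definition of $\mu^{sym}_{q,1}$. There is no serious obstacle in this argument: all the work has been done in Proposition~\ref{BBT} and Theorem~\ref{n2q}, and the only point worth verifying is the existence of a case d) elliptic curve over each admissible $\F_q$, which is a routine check supplied essentially by the tables of exceptional models in Proposition~\ref{BBT}.
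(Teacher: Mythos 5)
Your proof is correct and rests on the same essential ingredient as the paper's: the corollary is deduced directly from Theorem~\ref{n2q}. The small difference is in how the passage from $\mu^{sym}_{q,\mathcal{C}}(n)$ to $\mu^{sym}_{q,1}(n)$ is justified. You exploit the fact that $\mu^{sym}_{q,1}(n)$ is a minimum over genus-one curves, so a single witness curve suffices, and you sensibly choose a case d) curve of Proposition~\ref{BBT} (which exists over every admissible $\F_q$, and which is exactly the case treated in the proof of Theorem~\ref{n2q}). The paper instead argues that the bound holds uniformly over \emph{all} elliptic models, because the count $B_d$ of degree-$d$ places satisfies the same asymptotic estimate for every curve of fixed genus, so ``changing the model does not change the proof.'' Your route is the more economical one for the statement as written (only an upper bound on the minimum is claimed), while the paper's remark additionally records that the implied constant can be taken independent of the chosen genus-one model; both arguments are valid and yield the stated bound.
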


\begin{proof}
The proof is similar to the proof of Theorem~\ref{n2q}. Indeed, for a fixed genus $g$, the number $B_d$ of places of degree $d$, as claimed in~\cite[Corollary 5.2.10]{Stich}, is such that 
  $$ \frac{q^{d}}{{d}}-(2+7g)\frac{q^{{d}/2}}{{d}} < B_{d} < \frac{q^{d}}{{d}}+(2+7g)\frac{q^{{d}/2}}{{d}}.$$
Thus, for each curve of genus $g$, $B_d$ is asymptotically the same. Consequently, changing the model of the elliptic curve does not change the proof, and does not change asymptotically the bilinear complexity. 
\end{proof}

\noindent
Elliptic curves have already been used to bound the bilinear complexity of multiplication (see for example the work of Shokrollahi~\cite{Shokrollahi}, Ballet~\cite{Ballet6}, and Chaumine~\cite{chau}). Recently, Couveignes and Lercier~\cite{CouvLer} proposed a multiplication algorithm for finite field extensions $\F_{q^n}$, using normal elliptic bases. Their multiplication tensor  consists in 5 convolution products, 2 component-wise products, 1 addition and 3 subtractions. Note that convolution products can be computed at the expense of $O\left(n\log n \log |\log(n)|\right)$ operations in $\F_q$. Asymptotically, the tensor they produce is not competitive with ours from the point of view of bilinear complexity.
 
\subsection{Complexity of the construction}
Studies on bilinear complexity are well advanced, however we do not know a single polynomial construction of bilinear multiplication algorithm with linear or quasi-linear multiplicative complexity. 
In the case of bilinear multiplication algorithm with linear multiplicative complexity, namely the case of the usual strategy based upon the construction with growing genus, we cannot give information about the complexity of construction. Indeed, it is completely unclear how to construct explicitely points of high degree~\cite[Section 4, Remarks 5]{shtsvl}. However, using the new strategy with elliptic curves, we show that we can polynomially construct symmetric elliptic bilinear multiplication algorithms with quasi-linear multiplicative complexity.

\begin{Lemma}\label{PlaceDegreeN}
Let $E$ be an elliptic curve defined over $\F_q$ and let $F/\F_q$ be the associated elliptic function field. Then we can construct a degree $n$ place of $F/\F_q$ in time polynomial in $n$.
\end{Lemma}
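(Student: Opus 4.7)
The plan is to reduce the construction of a degree-$n$ place of $F/\F_q$ to finding an $\F_{q^n}$-rational point on the associated elliptic curve $E$ whose Frobenius orbit has full size $n$, and to show that such a point can be produced by a randomized polynomial-time procedure. First I would explicitly realize the extension $\F_{q^n}=\F_q[T]/(f(T))$ by producing an irreducible polynomial $f$ of degree $n$ over $\F_q$; this is a classical problem which is solvable in Las Vegas time polynomial in $n$ and $\log q$ (for instance via Shoup's algorithm or by applying the Ben-Or irreducibility test to random polynomials). Once this representation is fixed, all arithmetic in $\F_{q^n}$, including iteration of the Frobenius $\phi\colon x\mapsto x^q$ by repeated squaring, can be performed in time polynomial in $n$.

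Next I would sample candidate points of $E(\F_{q^n})$ as follows: pick a uniformly random $x\in\F_{q^n}$ and attempt to solve the defining equation of $E$ for $y$ (a quadratic in $y$ in odd characteristic, or an Artin--Schreier equation in characteristic $2$). Root extraction in a finite field is a standard polynomial-time probabilistic task, so this succeeds, when a solution exists, in expected polynomial time. The resulting pair $P=(x,y)\in E(\F_{q^n})$ is then tested: I would compute $\phi^d(P)=(x^{q^d},y^{q^d})$ for every proper divisor $d\mid n$ (there are $O(\log n)$ of them, each image computable by $O(\log n)$ squarings) and check that none of these equals $P$. Equivalently, one checks that the minimal polynomial of $x$ over $\F_q$ has degree exactly $n$. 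When this test passes, the Galois orbit of $P$ is a closed point of $E$ of degree $n$, which corresponds to the desired place of $F/\F_q$; it is conveniently represented by the minimal polynomial of $x$ together with $y$ as an element of $\F_q[x]$.

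It remains to bound the expected number of samples. By the Hasse--Weil estimate $\#E(\F_{q^d})\leq q^d+2q^{d/2}+1$, the number of points of $E(\F_{q^n})$ whose Frobenius orbit has size strictly less than $n$ is at most
\[
\sum_{\substack{d\mid n\\ d<n}} \#E(\F_{q^d}) \;=\; O(q^{n/2}),
\]
so a uniformly random point of $E(\F_{q^n})$ has exact degree $n$ with probability $1-O(q^{-n/2})$, which tends to $1$. Since a random $x$ satisfies the curve equation with probability bounded below by a positive constant, and conditional on this the orbit-size test succeeds with probability tending to $1$, a constant expected number of trials suffices, and the overall running time is polynomial in $n$.

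The genuinely delicate ingredient is the polynomial-time construction of an irreducible polynomial of degree $n$ over $\F_q$, which we invoke as a black box from the standard literature on fast arithmetic over finite fields; the rest of the argument is a direct counting computation based on Hasse--Weil, so no new obstacle arises beyond the careful bookkeeping of the Frobenius orbit test.
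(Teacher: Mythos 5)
Your proof is correct, and it shares the paper's basic reduction: a degree-$n$ place of $F/\F_q$ is obtained as the Frobenius (Galois) orbit of a point of $E(\F_{q^n})$ not defined over any proper subfield. Where you diverge is in how that point is produced. The paper invokes the deterministic Shallue--Van De Woestijne algorithm to construct a rational point of $E$ over $\F_{q^n}$ in time polynomial in $\log(q^n)$, and then simply asserts that the orbit of this point is a degree-$n$ place; you instead sample a random $x\in\F_{q^n}$, extract $y$ by root-finding, explicitly test that the Frobenius orbit has full length $n$, and justify the success probability by a Hasse--Weil count of the $O(q^{n/2})$ points lying in proper subfields. Your route is only Las Vegas (randomized irreducible-polynomial construction, randomized root extraction, rejection sampling), whereas the paper's is deterministic given the Weierstrass equation; on the other hand, your version is self-contained and, more importantly, actually addresses the possibility that the constructed point lies in $E(\F_{q^d})$ for some proper divisor $d\mid n$ --- a verification the paper's argument leaves implicit, since the Shallue--Van De Woestijne output carries no such guarantee. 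One small caveat: your parenthetical ``equivalently, one checks that the minimal polynomial of $x$ has degree exactly $n$'' is a sufficient rather than an equivalent condition for the point $(x,y)$ to have degree $n$ (the point could have degree $n$ while $x$ generates only $\F_{q^{n/2}}$), but since you only use it as a sufficient test and the counting argument is unaffected, this does not harm the proof.
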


\begin{proof}
In order to construct a degree $n$ place $Q$ of the elliptic function field $F/\F_q$, firstly we have to construct a rational point $\mathscr{P}= (x_{\mathscr{P}},y_{\mathscr{P}})$ of $E$ defined over $\F_{q^n}$ and then, we need to apply to the point $\mathscr{P}$, $n$-times the Frobenius map $\varphi$ defined by
\[ 
\begin{array}{llll}
\varphi: & E(\overline{\F}_{q^n}) & \longrightarrow & E(\overline{\F}_{q^n})\\
         &        ( x, y )       &  \longmapsto    &     (x^q, y^q).          \\ 
\end{array}
\]
Thus the orbit of $\mathscr{P}$ obtained under the action of $\varphi$ is a degree $n$ place. In 2006, Shallue and Van De Woestijne~\cite{Shallue} gave a deterministic polynomial-time algorithm that computes a nontrivial rational point given a Weierstrass equation for the elliptic curve. More precisely, they performed the computation of a nontrivial rational point on an elliptic curve $E$ defined over $\F_q$ in time polynomial in $\log(q)$. It follows that $\mathscr{P}$  can be constructed in time polynomial in $\log(q^n)$, and thus in time polynomial in $n$ since $q$ is fixed. The action of the Frobenius map $\varphi$ on the point $\mathscr{P}$ is simply a modular exponentiation that can be done polynomially. Consenquently, constructing a degree $n$ place of an elliptic function field can be done in time polynomial in $n$.
\end{proof}

\begin{Theorem}\label{complexity}
Given an elliptic curve $E$ defined over $\F_q$, one can polynomially construct a sequence $ \mathscr{A}_{q,n} $ of symmetric elliptic bilinear multiplication algorithms in finite fields  $\F_{q^n}$ for the given sequence $n \rightarrow +\infty$ such that 
    \[ \mu^{sym}_{q,E}(\mathscr{A}_{q,n})~\in~O\left( n(2q/K)^{\log_q^{*}(n)}\right),\]
where $K=2/3$ if the characteristic of $\F_q$ is 2 or 3, and $K=5/8$ otherwise.
\end{Theorem}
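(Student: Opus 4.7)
The plan is to run the recursive construction underlying Theorem~\ref{n2q} while tracking both the bilinear cost and the cost of the construction itself, tuning the multiplicities $u_i$ so as to produce the announced constant $K$.

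First, I would verify that every ingredient required by Theorem~\ref{AlgoRandriam} and Proposition~\ref{BBT} can be produced in time polynomial in $n$. The degree-$n$ place $Q$ on $E$ is supplied directly by Lemma~\ref{PlaceDegreeN}. The effective divisor $G = \sum u_i P_i$ is assembled by enumerating all closed points of $E$ of degree at most $d_1$, where $d_1 = O(\log_q n)$: this produces $B_{d_1} = O(n/\log n)$ points, each obtainable by the Shallue--Van de Woestijne algorithm over $\F_{q^{d_1}}$ followed by a Frobenius orbit, exactly as in the proof of Lemma~\ref{PlaceDegreeN}. The symmetric divisor $D = D_1 = D_2$ is built from $Q$ and a reference degree-zero divisor $R$ whose class in the Jacobian is dictated by the case of Proposition~\ref{BBT} matching the isomorphism type of $E$; this reduces to a fixed number of operations in the group $E(\F_q)$. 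On a genus-$1$ curve the numerical conditions (i') and (ii') simplify to degree inequalities for non-speciality plus at most a 2-torsion test in $E(\F_q)$, all polynomial in $n$.

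Next, I would apply the recursion. Choosing $u_i = 2$ at every place $P_i$ of degree $d_1$ yields $\deg G = 2 d_1 B_{d_1}$, so that the smallest $d_1$ meeting the slack required by Proposition~\ref{BBT} satisfies $q^{d_1} \geq n + O(1)$, hence $d_1 = O(\log_q n)$ and $B_{d_1} < qn/d_1$. Combining Theorem~\ref{AlgoRandriam} with the bound $\widehat{M}_{q^{d_1}}(2) \leq 3$ of~\cite{CenkOzbudak2} gives
\[
\mu^{sym}_{q,E}(n) \;\leq\; \widehat{M}_{q^{d_1}}(2)\, B_{d_1}\, \mu^{sym}_{q,E}(d_1) \;\leq\; \frac{3q\,n}{d_1}\, \mu^{sym}_{q,E}(d_1).
\]
In characteristic $2$ or $3$ this multiplies the functional $\mu^{sym}_{q,E}(\cdot)/(\cdot)$ by $3q = 2q/K$ with $K = 2/3$. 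In characteristic greater than $3$, the analogous analysis, using the best currently available estimate of $\mu^{sym}_{q,E}(d,u)/(du)$ at each intermediate step, yields $K = 5/8$ instead. After $\log_q^*(n)$ iterations the argument has shrunk below $q$ and the recursion terminates, producing the announced asymptotic bound $O(n (2q/K)^{\log_q^*(n)})$. The total construction cost telescopes over the $\log_q^*(n)$ levels and is dominated by the top level, which is polynomial in $n$ by the previous paragraph, so the whole procedure is polynomial in $n$.

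The main obstacle lies in this second paragraph: one must check that the uniform multiplicity-$2$ choice, coupled with the symmetric divisor $D = D_1 = D_2$ supplied by Proposition~\ref{BBT}, simultaneously satisfies the injectivity condition (i) at the doubled thickening $P_i^{[2]}$ and the surjectivity condition (ii) at $Q$ at every level; and for characteristic greater than $3$ one must identify the exact chain of intermediate estimates producing $K = 5/8$ rather than a weaker constant. Once these bookkeeping points are handled, the remainder of the argument is a direct unfolding of the strategy of Theorem~\ref{n2q}.
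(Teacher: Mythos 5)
There is a genuine gap: you have misidentified where the constant $K$ comes from, and as a result the core of the proof is missing. In the paper, $K$ has nothing to do with the choice of multiplicities $u_i$ or with $\widehat{M}_{q^{d_1}}(2)\leq 3$. It is the \emph{proportion of rational points of $E(\F_{q^d})$ that can be constructed deterministically in polynomial time}: Icart's hashing-into-elliptic-curves method yields a constant fraction $K=5/8$ of the points in time polynomial in $\log^3 q$, and Farashi, Shparlinski and Voloch improve this to $K=2/3$ in characteristic $2$ or $3$. Since only $K\cdot B_d$ places of degree $d$ are explicitly available, one must enlarge $d$ so that $q^d\geq 2n/K$ rather than $q^d\geq 2n$; feeding this into the recursion of Theorem~\ref{n2q} replaces the base $2q$ by $2q/K$ at every level, which is exactly the statement to be proved. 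Your derivation of $2/3$ via $3q = 2q/(2/3)$ is a numerical coincidence, and you concede yourself that you cannot produce $5/8$ by your route --- that is because no chain of $\mu^{sym}$ estimates produces it; it is a constant coming from point-construction algorithms, not from bilinear complexity bounds.

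Relatedly, your first paragraph asserts that one can ``enumerate all closed points of $E$ of degree at most $d_1$,'' each obtained by Shallue--Van de Woestijne. That algorithm produces \emph{one} nontrivial rational point, not all of them, and no deterministic polynomial-time enumeration of all points is invoked in the paper --- indeed the impossibility of guaranteeing all points is precisely why the fraction $K<1$ enters and why the final bound is $O(n(2q/K)^{\log_q^*(n)})$ rather than the $O(n(2q)^{\log_q^*(n)})$ of Theorem~\ref{n2q}. The remaining ingredients you list (the place $Q$ and divisor $D$ via Lemma~\ref{PlaceDegreeN}, and the polynomial-time construction of $\mathscr{L}(D)$, $\mathscr{L}(2D)$, their bases and the evaluation maps, for which the paper cites Shparlinski--Tsfasman--Vladut) are handled correctly, but without the point-construction argument the theorem as stated, with its specific constant $K$, is not established.
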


\begin{proof}
Let $F/\F_q$ be the elliptic function field associated to the curve $E$. According to the proof of Theorem~\ref{n2q}, to construct a symmetric elliptic multiplication algorithm in $ \F_{q^n} $ over $\F_q$, we first have to construct places and divisor of certain degree. Indeed, we need to construct 
 \begin{itemize}
  \item a place $Q$ of degree $n$ of $F/\F_q$,
  \item a divisor $D$ of degree $n$ of $F/\F_q$,
  \item a sufficient number $N$ of degree $d$ places of $F/\F_q$, such that the degree of the divisor $G$ formed by these $N$ places, is greater or equal to $2n$. 
 \end{itemize}
\noindent 
The divisor $D$ and the place $Q$ are equivalent in terms of construction~(in practice we can take any place to construct a divisor~\cite{Ballet3}), so their complexities of construction are similar and from Lemma~\ref{PlaceDegreeN} this complexity is in time polynomial in $n$. The point now is to construct sufficiently places of degree $d$ of $F/\F_q$. To achieve this, from lemma~\ref{PlaceDegreeN} it suffices to construct rational points of the curve $E$ over $\F_{q^d}$. Icart~\cite{Icart} shows that it is possible to construct deterministically, a constant proportion $K$ of the number of rational points of an elliptic curve defined over $\F_{q}$. More precisely, his method allows us to construct $K = 5/8$ of the number of rational points in time polynomial in $\log^3(q)$. Note that if the characteristic of $\F_q$ is 2 or 3, Farashi et al~\cite{FaShpaVo} proved that $K =2/3$. This implies that asymptotically, we can construct in time polynomial in $\log^3(q^d)$, a sufficient number of places of degree $d$ of $F/\F_q$ by choosing $d$ such that $q^d \geq 2n/K$. Finally, the complexity of construction of places of degree $d$ is polynomial in $\log^3(n)$, thus polynomial in $n$. In conclusion, we can polynomially construct symmetric elliptic billinear multiplication algorithms since for a given divisor $D$, construct vector spaces $\mathscr{L}(D),~\mathscr{L}(2D)$, associated basis $\mathscr{B}_D,~\mathscr{B}_{2D}$ and evaluation maps $Ev_{Q},~Ev_{P}$ can be done polynomially~\cite[Section 4, Remarks]{shtsvl}
(cf. also ~\cite[p. 509, Remark 4.3.33]{ShpaVla}).
\end{proof}

\begin{Remark}
This complexity can indeed be refined. We plan to study in detail this problem in a forthcoming work.  
\end{Remark}

\section{Upper Bounds and Example of construction}
\noindent
Using our strategy, we propose in this section:
\begin{itemize}
 \item upper bounds of symmetric bilinear complexity for large extension of finite fields $\F_2$ and $\F_3$, and
 \item  an example of a multiplication algorithm construction.
\end{itemize}
In order to obtain the best bounds of symmetric bilinear complexity, we use our Theorem~\ref{BBT} not with bounds $\mu^{sym}_{q,\mathcal{C}}(\deg P_{i},u_{i})$ derivated from the same elliptic curve $\mathcal{C}$, but with the better known bounds for $\mu_{q}(\deg P_{i},u_{i})$ as in Theorem~\ref{Randriambololona}. Moreover, for a fixed $n$, to obtain the best bounds of symmetric bilinear complexity, we need to find the best curve of genus one and thus we compute, not $\mu^{sym}_{q,\mathcal{C}}(n)$ but  $\mu^{sym}_{q,1}(n)$. We note throughout the rest of the paper $\mu^{sym}_{q}(n)$ instead of $\mu^{sym}_{q,1}(n)$.

\subsection{New Bounds}

In elliptic curve cryptography, the NIST suggests to use finite fields with  $2^{163}, 2^{233}$, $2^{283}, 2^{409}$ and $2^{571}$ elements~\cite{NIST}. 
Randriambololona in~\cite{Randriam} obtained the following bound $$ \mu^{sym}_{2}(163) \leq 910.$$
We improve this bound  $$ \mu^{sym}_{2}(163) \leq 906. $$\\
\noindent
In order to upgrade $\mu^{sym}_{2}(163)$, we seek out of the curves given in Theorem \ref{BBT}, the one which provides the lowest bilinear complexity. Using only higher multiplicity  with degree one and degree two places, the best curve turns out to be $y^2 + y + x^3 = 0$. This curve has 3 points of degree 1, and the lowest bilinear complexity is obtained with the divisor $G$ of degree $2.163$ defined as follows:\\
we take all 3 points of degree 1 with multiplicity 4, all 3 points of degree 2 with multiplicity 2, and all 2 points of degree 3, all 6 points of degree 5, all 11 points of degree 6 and 25 points of degree 8, all with multiplicity 1. Then the degree of G is
$$ \deg G = 3.1.4 + 3.2.2 + 2.3.1 + 6.5.1 + 11.6.1 + 25.8.1 = 326 = 2.163. $$

\noindent 
From Theorem \ref{BBT} used with the best known bounds for $\mu_{q}(\deg P_{i},u_{i})$ and values of Table \ref{T3} we obtain\\
 
\begin{tabular}{cl}
 $ \mu^{sym}_{2}(163) $ & $ \leq 3.\mu_{2}(1,4) + 3.\mu_{2}(2,2) + 2.\mu_{2}(3,1) + 6.\mu_{2}(5,1) + 11.\mu_{2}(6,1) + 25.\mu_{2}(8,1) $ \\
                  & $ \leq 3.\widehat{M_2}(4) + 3.\mu_{2}(2)\widehat{M_2}(2) + 2.\mu_{2}(3) + 6.\mu_{2}(5) + 11.\mu_{2}(6) + 25.\mu_{2}(8) $ \\
                  & $ \leq 3.8 + 3.3.3 + 2.6 + 6.13 + 11.15 + 25.24 $ \\
 $ \mu^{sym}_{2}(163) $ & $ \leq 906.$\\
\end{tabular}
\vspace{0,1cm}

\noindent
Table \ref{Tablemu2n} (respectively Table \ref{Tablemu3n}) represents optimal bounds for $\mu^{sym}_{2}(n)$ (respectively $\mu^{sym}_{3}(n)$) and the size of extension for $\F_2$ is in accordance with the NIST for elliptic curve cryptography. The column $N$ represents the number of places of arbitrary degrees used to obtain the optimal bound, and column $U$, the associated order for derivative evaluation. As example, for $n=233$, we obtain the lower bound $1340$ using the elliptic curve (up to isomorphism) defined by $y^2 + xy = x^3 + 1$. This lower bound is achieved with $N=[ 4, 2, 0, 2, 8, 8, 10, 34 ]$ and  $U = [ 5, 2, 1, 1, 1, 1, 1, 1 ]$, meaning that we use 4 degree one places with multiplicity $u_1$ equals 5, 2 degree two places with multiplicity $u_2$ equals 2 and the remainder with multiplicity 1.

\begin{table}[h]
\caption{Optimal bounds for $\mu^{sym}_{2}(n)$. }
~~\\
 \begin{tabular}{|c|c|c|l|l|}
\hline
  $n$ & $\mu^{sym}_2(n)$ & Elliptic Curve  & \hspace{2,5cm}N  & \hspace{1,5cm} U  \\

\hline
  163 &   906      &   $y^2 + y + x^3=0$   &  $[ 3, 3, 2, 0, 6, 11, 0, 25 ]$ &  $[ 4, 2, 1, 1, 1, 1, 1, 1 ]$        \\
\hline
  233 &   1340      &  $y^2 + xy + x^3 + 1=0$  &  $[ 4, 2, 0, 2, 8, 8, 10, 34 ]$ & $[ 5, 2, 1, 1, 1, 1, 1, 1 ]$          \\
\hline
  283 &   1668      &  $y^2 + xy + x^3 + 1=0$  &  $[ 4, 2, 0, 2, 8, 8, 14, 34, 8 ]$ & $[ 5, 2, 1, 1, 1, 1, 1, 1, 1 ]$          \\
\hline
  409 &  2495       &  $y^2 + xy + x^3 + 1=0$ &  $[ 4, 2, 0, 2, 8, 8, 16, 34, 0, 31 ]$ & $[ 5, 2, 1, 1, 1, 1, 1, 1, 1, 1 ]$        \\
\hline
  571 &  3566       &   $y^2 + xy + x^3 + 1=0$  &  $[ 4, 2, 0, 2, 8, 8, 16, 34, 2, 62 ]$  & $[ 5, 1, 1, 1, 1, 1, 1, 1, 1, 1 ]$       \\
\hline

\end{tabular}\label{Tablemu2n}
\end{table}

\begin{table}[h]
\caption{Optimal bounds for $\mu^{sym}_{3}(n)$. }
~~\\
 \begin{tabular}{|c|c|c|l|l|}
\hline
  $n$ & $\mu^{sym}_3(n)$ & Elliptic Curve  & \hspace{1,5cm}N  & \hspace{1cm} U  \\

\hline
  57 &   234      &   $y^2 + 2x^3 + 2x^2 + 1=0$   &  $[ 3, 6, 11, 15]$ &  $[ 3, 1, 1, 1]$        \\
\hline
  97 &   426      &  $y^2 + 2x^3 + 2x^2 + 1=0$  &  $[ 3, 6, 11, 15, 16 ]$ & $[ 3, 1, 1, 1, 1]$          \\
\hline
  150 &   681      &  $y^2 + 2x^3 + 2x^2 + 1=0$  &  $[ 3, 6, 11, 14, 38 ]$ & $[ 3, 1, 1, 1, 1]$          \\
\hline
  200 &  925       &  $y^2 + 2x^3 + x^2 + 1=0$ &  $[ 2, 5, 12, 21, 47, 5 ]$ & $[ 3, 1, 1, 1, 1, 1]$        \\
\hline
  400    &  1926       & $y^2 + 2x^3 + x^2 + 1=0$ & $[ 2, 5, 12, 21, 47, 72 ]$ & $[ 2, 1, 1, 1, 1, 1 ]$  \\
\hline
\end{tabular}\label{Tablemu3n}
\end{table}

\subsection{Effective multiplication algorithm in $ \F_{3^{57}}$}

In this section, we choose to present the construction of the multiplication algorithm in $ \F_{3^{57}}$ with $234$ bilinear multiplications, using elliptic curves, points of higher degree and higher multiplicity.

\subsubsection{\textbf{Method}}
Let $ \alpha $ and $ \beta $  be two elements of $ \F_{3^{57}} $. Since there exists a point $Q$ of degree 57, the residue class field $ \mathcal{O}_{Q}/Q $ is isomorphic to $ \F_{3^{57}} $ and we can consider that both elements are in $ \mathcal{O}_{Q}/Q $. Furthermore, there exists a divisor $D$ such that the evaluation map
  $$ Ev_{Q} : \mathscr{L}(D) \longrightarrow \frac{\mathcal{O}_{Q}}{Q} $$
  $$ \hspace{2cm}f ~~~~\longmapsto f(Q) $$
is surjective. Hence there exist two functions $f_{\alpha},~f_{\beta}~\in~\mathscr{L}(D)$ such that $ Ev_{Q}(f_{\alpha})= \alpha, ~\mbox{and}~  Ev_{Q}(f_{\beta})= \beta $.
Finally, to obtain the product $ \alpha.\beta $, we compute 
$ Ev_{Q}(f_{\alpha}.f_{\beta})= \alpha.\beta. $
At this step, we have to construct the only $f_{\gamma}~\in~\mathscr{L}(2D)$ such that $~f_{\alpha}.f_{\beta} = f_{\gamma}$. The unicity of $f_{\gamma}$ comes from the injectivity of the second evaluation map $ Ev_{\mathcal{P}}$.
Consider  $f_{\alpha}=\sum_{i=1}^{57}a_{i}f_{i},~f_{\beta}=\sum_{i=1}^{57}b_{i}f_{i}$  and
let $ f_{\gamma} $ be the product of $f_{\alpha}$ and $f_{\beta}$ given by the relation
\begin{equation}\label{Relation} 
\begin{array}{cc} 
    \underbrace{\left( \sum_{i=1}^{57}a_{i}f_{i}\right).\left( \sum_{i=1}^{57}b_{i}f_{i}\right)}= & \underbrace{\sum_{i=1}^{114}c_{i}f_{i}},\\
 M  & C
 \end{array} 
\end{equation} 
\noindent
where M and C are the matrix representation of the relation (\ref{Relation}).

\subsubsection{\textbf{Choice of the degree of places}}

For a fixed $n$, it is not clear how to find the maximal degree of places to use, but in elliptic case it is easy to perform it. 
From the proof of Theorem \ref{n2q}, the maximal degree $d$ of places must verify $ q^d > 2n $, so $d$ equals 5 for $n=57$.

\subsubsection{\textbf{Choice of the Curve}}
Let $P_{j}$ denotes the set of places of degree $j$ and $P_{j}[~k~]$ be the $k^{th}$ places of degree $j$. In order to find the suitable curve, one just have to execute the procedure below for each curve of  Theorem~\ref{BBT}:
\begin{enumerate}
\item construct the associated elliptic function field,
\item determine all places of degree 1,\,2,\,3, 4 and 5,
\item find all combinations of the divisor $ G = u_{1}P_{1}+ \cdots + u_{N}P_{N} $ with the appropriate degree,
\item for each combination, compute $\sum_{i=1}^{N} \mu_{q}(\deg P_{i},u_{i}) $ and store the lowest bilinear complexity.
\end{enumerate} 

\noindent
Note that supersingular curves can be used with no danger since we only use points for interpolation. Results of the previous procedure are collected in Table~\ref{mu357}.

\begin{table}[h]
\begin{flushleft}
\caption{Choice of the curve for $\mu^{sym}_{3}(57)$.}\label{mu357}
~~\\
\begin{tabular}{|l|c|c|c|}
\hline
 Equation  & N & U & $\mu^{sym}_{3,\mathcal{C}}(57)$ \\
\hline
$ \mathcal{C}:=  y^2 + 2x^3 + 2x^2 + 2 = 0 $  & $ [ 6, 3, 4, 21, 0 ]$ & $[ 2, 1, 1, 1, 1 ]$ & $240$ \\
\hline
$ \mathcal{C}:=  y^2 + 2x^3 + x^2 + 1 = 0 $  & $[ 2, 5, 12, 15, 1 ]$ & $[ 2, 1, 1, 1, 1 ]$ & $240$ \\
\hline
$ \mathcal{C}:=  y^2 + 2x^3 + x^2 + 2 = 0 $  & $[ 5, 5, 5, 15, 3 ]$ & $[ 3, 1, 1, 1, 1 ]$ & $241$ \\
\hline
$ \mathcal{C}:= y^2 + 2x^3 + 2x^2 + 1 = 0 $  & $[ 3, 6, 11, 15, 0 ] $& $[ 3, 1, 1, 1, 1 ]$ & $\textbf{234}$ \\
\hline
$ \mathcal{C}:=  y^2 + 2x^3 + 2x = 0 $  & $[ 4, 6, 8, 9, 6 ]$ & $[ 3, 1, 1, 1, 1 ]$  & $239$ \\
\hline
$ \mathcal{C}:= y^2 + 2x^3 + x + 2 = 0 $  & $[ 7, 0, 7, 18, 0 ] $& $[ 3, 1, 1, 1, 1 ]$ & $239$ \\
\hline
$ \mathcal{C}:=  y^2 + 2x^3 + x + 1 = 0 $  & $[ 1, 3, 9, 19, 1 ]]$ & $[ 3, 1, 1, 1, 1 ]$  & $251$ \\
\hline
\end{tabular}

\end{flushleft}
\end{table}

\noindent
From Table~\ref{mu357}, the suitable curve, up to isomorphism, is $$ E : y^2 + 2x^3 + 2x^2 + 1 = 0,$$
and the divisor $G$ is constructed as follows: we take all 3 points of degree 1 with multiplicity 3, and then we take all 6 points of degree 2, all 11 points of degree 3, and all 15 points of degree 4,  all with multiplicity 1. It must be verified that G has degree
$$ \deg G = 3.1.3 + 6.2.1 + 11.3.1 + 15.4.1 = 114 = 2 \cdot 57. $$

\noindent
Using values of Table~\ref{T3} we obtain\\

\begin{tabular}{cl}
  $ \mu^{sym}_{3}(57) $ & $ \leq 3.\mu_{3}(1,3) + 6.\mu_{3}(2,1) + 11.\mu_{3}(3,1) + 15.\mu_{3}(4,1)$\\
                  & $ \leq 3.\widehat{M_3}(3) + 6.\mu_{3}(2) + 11.\mu_{3}(3) + 15.\mu_{3}(4)$\\
  $ \mu^{sym}_{3}(57) $  & $ \leq 234$. \\
\end{tabular}

\subsubsection{\textbf{Place Q and Divisor D}}

In the following, we use the notation of magma~\cite{MAGMA} for the representation of places and divisors. In order to construct $\F_{3^{57}}$ we choose the place Q defined by \\\\
\noindent
$Q:=(x^{57} +   x^{56} +   2x^{54} +   2x^{53} +   2x^{51} +   2x^{50} +   2x^{49} +   x^{48} +   x^{46} +   x^{43} +   2x^{42} +   2x^{41} +   2x^{39} +   2x^{38} +   2x^{37} +   2x^{36} +   x^{35} +   2x^{32} +   2x^{29} +   x^{28} +   x^{27} +   2x^{26} +   x^{25} +   x^{24} +   2x^{23} +   2x^{21} +   2x^{20} +   x^{19} +   x^{18} +   2x^{15} +   x^{14} +   2x^{13} +   x^{10} +   2x^{8} +   x^{7} +   x^{6} +   2x^{5} +   x^{4} +   x^{3} +   2x^{2} +   x +   2, z +   2x^{56} +   x^{55} +   x^{54} +   x^{53} +   x^{52} +   2x^{50} +   2x^{49} +   x^{48} +   2x^{47} +   2x^{45} +   2x^{43} +   2x^{42} +   2x^{41} +   2x^{38} +   2x^{37} +   2x^{36} +   2x^{35} +   x^{34} +   x^{33} +   x^{32} +   2x^{31} +   2x^{29} +   x^{28} +   2x^{25} +   2x^{24} +   x^{23} +   2x^{22} +   2x^{20} +   x^{19} +   2x^{18} +   x^{17} +   x^{15} +   2x^{13} +   2x^{12} +   x^{11} +   x^{10} +   x^{8} +   x^{6} +   2x^{5} +   x^{2} +   2x +   1),$\\

\noindent
and we choose the following divisor $\mathscr{D}$ such that\\\\
\noindent
$ \mathscr{D} = (x^{57} +  x^{55} +  x^{53} +  x^{48} +  x^{46} +  2x^{45} +  2x^{43} +  2x^{42} +  x^{40} +  2x^{36} +  x^{35} +  x^{34} +  x^{33} +  x^{32} +  x^{29} +  2x^{27} +  x^{26} +  2x^{24} +  2x^{23} +  2x^{21} +  2x^{18} +  2x^{17} +  x^{16} +  2x^{13} +  x^{12} +  2x^{10} +  2x^{9} +  x^{8} +  2x^{7} +  2x^{6} +  2x^{3} +  2x^{2} +  x +  2, z +  x^{56} +  2x^{55} +  x^{54} +  x^{53} +  2x^{52} +  x^{51} +  x^{50} +  2x^{49} +  x^{48} +  2x^{47} +  2x^{46} +  2x^{45} +  x^{43} +  2x^{42} +  2x^{41} +  2x^{39} +  x^{38} +  x^{37} +  x^{36} +  2x^{35} +  2x^{34} +  x^{32} +  2x^{30} +  2x^{29} +  2x^{28} +  x^{27} +  x^{26} +  x^{25} +  x^{24} +  x^{21} +  x^{20} +  2x^{17} +  x^{16} +  x^{13} +  2x^{12} +  x^{10} +  x^{9} +  x^{8} +  2x^{7} +  2x^{6} +  2x^{5} +  x^{4} +  2x^{2}).$\\

\noindent
to construct $ \mathscr{B} = \{ f_{1},\ldots,f_{114} \} $ the basis of $\mathscr{L}(2\mathscr{D})$ containing a basis of $\mathscr{L}(\mathscr{D})$.

\subsubsection{\textbf{Interpolation Phase}}
 
In order to construct the effective algorithm of multiplication  in $ \F_{3^{57}}$,  namely explicit formulas for bilinear multiplications, we have to evaluate the relation (\ref{Relation}) at all points chosen to obtain the bound 234. We classify the interpolation phase starting with places used with derivative evaluation $u > 1$, and we finish by the ones used with no derivative evaluation.

\begin{itemize}

\item {\textbf{Derivative Evaluation}}\\
Remember that the higher multiplicity $ u = 3 $, occurs only with places of degree 1. This means that we use the local expansion at order 3 for all points of degree 1, hence for any function $f_{i}$ of the basis $\mathscr{B}$ we have
\begin{equation}\label{DEP1}
f_{i}(P_{1}[k]) = \alpha_{i,0} + \alpha_{i,1}t_{k} + \alpha_{i,2}{t_{k}}^2,
\end{equation}
$\mbox{where}~\alpha_{i,j}~\mbox{is an element of}~\F_{3},~\mbox{and}~t_{k}~\mbox{is the local parameter for}~P_{1}[k]$. Evaluating the relation (\ref{Relation}) at points of degree 1 leads to
\begin{equation}\label{RP1}
  \left( \sum_{i=1}^{57}a_{i}f_{i}(P_{1}[~k~])\right).\left( \sum_{i=1}^{57}b_{i}f_{i}(P_{1}[~k~])\right)= \sum_{i=1}^{114}c_{i}f_{i}(P_{1}[~k~]),
  \end{equation} 
where $k~\in~[1,\ldots,3]$, $a_{i},~b_{i},~\mbox{and}~c_{i}~\in~\F_{3}$. Substituting  expression (\ref{DEP1}) in equation (\ref{RP1}) allows us to write   

\begin{equation}\label{SP1}
\left( A_{0} + A_{1}t_{k} + A_{2}{t_{k}}^2 \right).\left( B_{0} + B_{1}t_{k} + B_{2}{t_{k}}^2 \right) =   C_{0} + C_{1}t_{k} + C_{2}{t_{k}}^2,
\end{equation}
where $$ A_{\ell} = \sum_{i=1}^{57}a_{i}\alpha_{i,\ell},~B_{\ell} = \sum_{i=1}^{57}b_{i}\alpha_{i,\ell}~~\mbox{and}~~  C_{\ell} = \sum_{i=1}^{114}c_{i}\alpha_{i,\ell}.$$

\noindent
The quantity (\ref{SP1}) is exactly the complexity of 3-multiplication of two 3-$term$ polynomials of $ \F_{3^{\deg P_1}}[t_{k}]$. We have $\widehat{M_3}(3)= 5$, meaning that to obtain the three first coefficients of the product, we need the 5 bilinear multiplications in $ \F_{3^{\deg P_1}}$
\begin{center}
\begin{tabular}{ll}
$m_{1} =$ & $A_{0}.B_{0},$ \\ 
$m_{2} =$ & $A_{1}.B_{1},$ \\ 
$m_{3} =$ & $A_{2}.B_{2},$ \\ 
$m_{4} =$ & $(A_{0}+ A_{1}).(B_{0} + B_{1}),$ \\ 
$m_{5} =$ & $(A_{0}+ A_{2}).(B_{0} + B_{2}).$
\end{tabular}
\end{center}
 
\noindent 
Remember, if we use derivative evaluation with places of degree more than one, we should have 5 bilinear multiplications in $ \F_{3^{\deg P}}$, and finally we should add $ \mu_3(\deg P)$ the bilinear complexity of multiplication in $ \F_{3^{\deg P}}$. This being said, for our example we use all 3 points of degree 1 with multiplicity 3, so we obtain 15 bilinear multiplications, which matrix representation is

\[ \left(
  \begin{array}{ c }
     m_{1}\\
     m_{4} - m_{1} - m_{2}\\
     m_{5} - m_{3} - m_{1} + m_{2}\\
m_{6}\\
     m_{9} - m_{6} - m_{7}\\
     m_{10} - m_{8} - m_{6} + m_{7}\\
m_{11}\\
     m_{14} - m_{11} - m_{12}\\
     m_{15} - m_{13} - m_{11} + m_{12}\\
     
  \end{array} \right)=
   \left(
  \begin{array}{ c }
     C_{1}\\
     C_{2}\\
     C_{3}\\
     C_{4}\\
     C_{5}\\
     C_{6}\\
     C_{7}\\
     C_{8}\\
     C_{9}\\
  \end{array} \right).
\]

\vspace{0,2cm}
\noindent
For places of higher degree, we use all of them with multiplicity 1, thus with no derivative evaluation.\\

\item {\textbf{No Derivative Evaluation}}\\

\noindent
Evaluating the relation (\ref{Relation}) at points of degree $ \deg P_j$ leads to
 
\begin{equation}\label{REval}
 \left( \sum_{i=1}^{57}a_{i}f_{i}(P_{j}[~k~])\right).\left( \sum_{i=1}^{57}b_{i}f_{i}(P_{j}[~k~])\right)=  \sum_{i=1}^{114}c_{i}f_{i}(P_{j}[~k~]).
\end{equation} 

\noindent
For any function $ f_{i} $ of the basis $\mathscr{B}$, $~f_{i}(P_{j}[~k~])$ is an element of the finite field $ \F_{3^{\deg P_j}} $ in which a representation is 
$$  \F_{3^{\deg P_j}} = \F_{3}(w_{k}) = \frac{\F_{3}[X]}{< P_{j}[~k~]>}.$$
If the set $ \{ 1,w_{k},\ldots,{w_{k}}^{j-1} \}$ denotes a basis of $ \F_{3^{\deg P_j}} $, then there exist $j$ elements, $s_{i,0},s_{i,1},\ldots,s_{i,j-1}$ of
$\F_{3}$ such that 

\begin{equation}\label{DevCo}
f_{i}(P_{j}[~k~]) = s_{i,0}w_{k}^{0} + s_{i,1}w_{k}^{1} + \cdots + s_{i,j-1}{w_k}^{j-1}. 
\end{equation}  \\

\noindent
Equation (\ref{DevCo}) allows us to rewrite relation (\ref{REval}) as  

\begin{equation}\label{mudegPj}
 \left( \sum_{\ell = 0}^{j-1}A_{\ell}{w_{k}}^{\ell} \right).\left( \sum_{\ell = 0}^{j-1}B_{\ell}{w_{k}}^{\ell} \right) =   \left( \sum_{\ell = 0}^{j-1}C_{\ell}{w_{k}}^{\ell} \right), 
\end{equation}
   
\noindent
where

$$ A_{\ell} = \sum_{i=1}^{57}a_{i}s_{i,\ell},~~~  B_{\ell} = \sum_{i=1}^{57}b_{i}s_{i,\ell},~\mbox{and}~~  C_{\ell} = \sum_{i=1}^{114}c_{i}s_{i,\ell}. $$
  
\noindent
One can easily identify expression (\ref{mudegPj}) as the multiplication of two elements of $\F_{3^{\deg P_j}}$ over $\F_{3}$. The bilinear complexity of multiplication is, in the case of interpolation at places with no derivative evaluation, $\mu_3(\deg P)$.

\begin{itemize}
 \item When $\deg P = 2$ equation (\ref{mudegPj}) becomes 
 $$ \left( \sum_{\ell = 0}^{1}A_{\ell}{w_{k}}^{\ell} \right).\left( \sum_{\ell = 0}^{1}B_{\ell}{w_{k}}^{\ell} \right) =   \left( \sum_{\ell = 0}^{1}C_{\ell}{w_{k}}^{\ell} \right), $$
and this expression is the multiplication of two elements of $\F_{3^{2}}$ over $\F_{3}$ which bilinear complexity $\mu_3(2)$ equals 3. It means that to obtain coefficients $C_0,C_1$, one need three bilinear multiplications, obtained with Karatsuba algorithm and defined by

\vspace{0,1cm}

\begin{flushleft}
\begin{tabular}{ll}
$m_{1} =$ & $A_{0}.B_{0},$ \\ 
$m_{2} =$ & $A_{1}.B_{1},$ \\ 
$m_{3} =$ & $(A_{0}+ A_{1}).(B_{0} + B_{1}).$ \\ 
\end{tabular}
\end{flushleft}

\vspace{0,1cm}

\item For degrees 3 places, we have $\mu_3(3)=6$ where the 6 multiplications needed are
\vspace{0,1cm}
\begin{flushleft}
\begin{tabular}{ll}
$ m_{1 } = $ & $ A_{0}.B_{0}, $ \\
$ m_{2 } = $ & $ A_{1}.B_{1}, $ \\
$ m_{3 } = $ & $ A_{2}.B_{2}, $ \\
$ m_{4 } = $ & $ (A_{0} + A_{1}).(B_{0} + B_{1}), $ \\
$ m_{5 } = $ & $ (A_{0} + A_{2}).(B_{0} + B_{2}), $ \\
$ m_{6 } = $ & $ (A_{1} + A_{2}).(B_{1} + B_{2}). $ \\
\end{tabular}
\end{flushleft}

\vspace{0,2cm}

\item Finally for degrees 4 places where $
_3(4)=9$, with

\vspace{0,2cm}

\begin{flushleft}
 \begin{tabular}{ll}
$ m_{1} = $ & $ A_{0}.B_{0}, $ \\
$ m_{2} = $ & $ A_{1}.B_{1}, $ \\
$ m_{3} = $ & $ A_{2}.B_{2}, $ \\
$ m_{4} = $ & $ A_{3}.B_{3} $, \\
$ m_{5} = $ & $ (A_{0} + A_{1}).(B_{0} + B_{1}), $ \\
$ m_{6} = $ & $ (A_{0} + A_{2}).(B_{0} + B_{2}), $ \\
$ m_{7} = $ & $ (A_{2} + A_{3}).(B_{2} + B_{3}), $ \\
$ m_{8} = $ & $ (A_{1} + A_{3}).(B_{1} + B_{3}), $ \\
$ m_{9} = $ & $ (A_{0} + A_{1} + A_{2} + A_{3}).(B_{0} + B_{1} + B_{2} + B_{3}). $ 
\end{tabular}
\end{flushleft}
\end{itemize}

\end{itemize}

\subsubsection{\textbf{Evaluation at Place $Q$}}
In order to complete the multiplication algorithm, we have to reconstruct $f_{\gamma}$ and then evaluate it at the chosen place $Q$. The final matrix representation of the interpolation relation $\mathscr{R}$ is

\vspace{0,1cm}

\[ \underbrace{\left(
  \begin{array}{ c }
     m_{1}\\
     m_{4} - m_{1} - m_{2}\\
     m_{5} - m_{3} - m_{1} + m_{2}\\          
     m_{6}\\
     m_{9} - m_{6} - m_{7}\\
     m_{10} - m_{8} - m_{6} + m_{7}\\     
     m_{11}\\
     m_{14} - m_{11} - m_{12}\\
     m_{15} - m_{13} - m_{11} + m_{12}\\    
     \vdots\\
     \vdots\\
     \vdots\\
 m_1 + \cdots + 2m_{230} - m_{231} + m_{234}
  \end{array} 
  \right) }_{M} =
 \underbrace{ \left(
  \begin{array}{ c }
     11101101\ldots01020010\\
     01001110\ldots10200110\\
     21201101\ldots01020022\\          
     21201101\ldots10221122\\
     11201011\ldots02021010\\
     11201110\ldots11020011\\   
     01201101\ldots01020112\\
     11201001\ldots01020211\\     
     \vdots\\
     \vdots\\
     \vdots\\    
     21201110\ldots01020011\\
     01201111\ldots01020011     
  \end{array} 
  \right) }_{G}
  \underbrace{ \left(
  \begin{array}{ c  }
     c_{1}\\
      \vdots\\
      \vdots\\
      \vdots\\
      \vdots\\
      \vdots\\
      \vdots\\
      \vdots\\              
      \vdots\\
      c_{114}
  \end{array} 
  \right). }_{C}
  \]

\vspace{1cm}
\noindent
Since $G$ is invertible, we have $ G^{-1}.M = (c_{1},\ldots,c_{114}) $ and then $ f_\gamma $ the only function of $\mathscr{L}(2\mathscr{D})$ such that $~f_{\alpha}.f_{\beta} = f_{\gamma}$ is defined by 
                            $$ f_{\gamma} = c_{1}f_{1} + \cdots + c_{114}f_{114}.$$
Recall that to obtain the product $ \alpha.\beta $ we just have to evaluate $ f_{\gamma} $ at the place $Q$ so
$$  \alpha.\beta = f_{\gamma}(Q) = c_{1}f_{1}(Q) + \cdots + c_{114}f_{114}(Q). $$

\vspace{1cm}

\subsubsection{\textbf{Reconstruction of  $ f_\gamma $ in $ \mathscr{L}(\mathscr{D}) $}}
\noindent
To complete the algorithm, we must find coefficients $\widehat{c}_i$ for $i~[1..57]$ such that 

$$   \left( \sum_{i=1}^{57}a_{i}f_{i}\right).\left( \sum_{i=1}^{57}b_{i}f_{i}\right)=  \sum_{i=1}^{57} \widehat{c}_{i}f_{i}. $$
\noindent
Let
\begin{center}
\begin{tabular}{cc}
$e_1 $ & $ = f_1(Q), $\\
$\vdots$ & $\vdots $ \\
$e_{57} $ & $ = f_{57}(Q), $\\
$\vdots$ & $\vdots $ \\
$e_{114} $ & $ = f_{114}(Q). $\\
\end{tabular}
\end{center}
\noindent
With these notations we have
                   
$$ f_{\gamma}(Q) = c_{1}e_{1} + \cdots + c_{57}e_{57} + c_{58}e_{58} + \cdots + c_{114}e_{114}.$$

\vspace{0,5cm}
\noindent
Vectors $ (e_1,\ldots,e_{57}) $ form a basis of $\F_{3^{57}}$ as $ (f_1,\ldots, f_{57}) $ is a basis of $ \mathscr{L}(\mathscr{D}) $, then to find coefficients $\widehat{c}_i$ for $i~\in~[1..57]$, it is sufficient to express vectors $ \left( e_{58},\ldots,e_{114} \right)$ according to $\left( e_1,\ldots,e_{57} \right)$. This leads to

\begin{center}
\begin{tabular}{cc}
$e_{58} $ & $ = e_1 + 2e_2 + \cdots + e_{57}, $\\
$\vdots$ & $\vdots $ \\
$e_{114} $ & $ = 2e_{1} + e_2 + \cdots + 2_{57}, $\\
\end{tabular}
\end{center}

\noindent
and bringing together terms in $\left( e_1,\ldots,e_{57} \right)$, we finally get

$$ f_{\gamma}(Q) =\underbrace{(c_{1} + c_{58}+\cdots+c_{114})}_{\widehat{c}_1}e_{1} + \cdots + \underbrace{(c_{57} + \cdots + 2c_{114}) }_{\widehat{c}_{57}}e_{57}.$$
\noindent
Explicit formulas can be found at the following address\\ http://eriscs.esil.univmed.fr/dotclear/public/res/mtukumuli/FE.pdf.\\


\end{document}